\renewcommand{\epsilon}{\varepsilon}
\renewcommand{\leq}{\leqslant}
\renewcommand{\le}{\leqslant}
\renewcommand{\geq}{\geqslant}
\renewcommand{\ge}{\geqslant}
\newtheorem{thm}{Theorem}[section]
\newtheorem{lem}[thm]{Lemma} 
\newtheorem{prop}[thm]{Proposition}
\theoremstyle{definition}
\newtheorem{rem}[thm]{Remark}
\numberwithin{equation}{section}
\newcommand{\R}{\mathbb{R}}
\newcommand{\Sph}{\mathbb{S}}
\newcommand{\diam}{\operatorname{diam}}
\newcommand{\supp}{\operatorname{supp}}
\newcommand{\PV}{\mathrm{P.V.}}
\newcommand{\RE}{\operatorname{Re}}  
\newcommand{\dd}{\mathop{}\!d} 
\def\XXint#1#2#3{{\setbox0=\hbox{$#1{#2#3}{\int}$}
\vcenter{\hbox{$#2#3$}}\kern-.5\wd0}}
\newcommand{\pa}{\partial}
\newcommand{\ol}{\overline}
\newcommand{\Om}{\Omega}
\newcommand{\Ga}{\Gamma}
\newcommand{\na}{\nabla}
\newcommand{\al}{\alpha}
\newcommand{\ga}{\gamma} 
\newcommand{\chf}{\mathds{1}}
\newcommand{\de}{\delta}
\newcommand{\ka}{\kappa}
\newcommand{\la}{\lambda}
\newcommand{\La}{\Lambda}
\newcommand{\ul}{\underline}
\newcommand{\sS}{\mathcal{S}}
\begin{document}

\title[Quantitative stability for the nonlocal overdetermined Serrin problem]{Quantitative stability \\ for the nonlocal overdetermined Serrin problem}

\author[S. Dipierro]{Serena Dipierro}
\address{Serena Dipierro: Department of Mathematics and Statistics, The University of Western Australia, 35 Stirling Highway, Crawley, Perth, WA 6009, Australia}
\email{serena.dipierro@uwa.edu.au}

\author[G. Poggesi]{Giorgio Poggesi}
\address{Giorgio Poggesi: Department of Mathematics and Statistics, The University of Western Australia, 35 Stirling Highway, Crawley, Perth, WA 6009, Australia}
\email{giorgio.poggesi@uwa.edu.au}

\author[J. Thompson]{Jack Thompson}
\address{Jack Thompson: Department of Mathematics and Statistics, The University of Western Australia, 35 Stirling Highway, Crawley, Perth, WA 6009, Australia}
\email{jack.thompson@research.uwa.edu.au}

\author[E. Valdinoci]{Enrico Valdinoci}
\address{Enrico Valdinoci: Department of Mathematics and Statistics, The University of Western Australia, 35 Stirling Highway, Crawley, Perth, WA 6009, Australia}
\email{enrico.valdinoci@uwa.edu.au}

\subjclass[2020]{35R11, 35N25, 35B35, 35B50, 47G20} 

\date{\today}

\dedicatory{}

\begin{abstract}
We establish quantitative stability for the nonlocal Serrin overdetermined problem, via the method of the moving planes.
Interestingly, our stability estimate is even better than those obtained so far in the classical setting (i.e., for the classical Laplacian) via the method of the moving planes.

A crucial ingredient is the construction of a new antisymmetric barrier, which allows a unified treatment of the moving planes method. This strategy allows us to establish a new general quantitative nonlocal maximum principle for antisymmetric functions, leading to new quantitative nonlocal versions of both the Hopf lemma and the Serrin corner point lemma.

All these tools -- i.e., the new antisymmetric barrier, the general quantitative nonlocal maximum principle, and the quantitative nonlocal versions of both the Hopf lemma and the Serrin corner point lemma -- are of independent interest. 
\end{abstract}

\maketitle

\section{Introduction}

The classical Serrin problem~\cite{MR333220} is the paradigmatic example of an ``overdetermined'' problem in which the existence of a solution satisfying both a Dirichlet and a Neumann boundary condition necessarily forces the domain to be a ball. 

To efficiently utilize classification results of this type in concrete applications it is often desirable to have a ``quantitative'' version, for instance stating that if the Neumann condition is ``almost'' satisfied, then the domain is necessarily ``close'' to a ball.

The recent literature has also intensively investigated the Serrin problem in the fractional setting. This line of research has profitably developed many aspects of the theory of nonlocal equations, including boundary regularity, Pohozaev identities, maximum and comparison principles, various versions of boundary and external conditions, a specific analysis of antisymmetric functions, etc.

However, despite the broad literature on the fractional Serrin problem, a quantitative version of this result is still missing, due to the intrinsic difficulties in adapting the classical techniques to the nonlocal case.

This paper aims to fill this gap in the literature, finally obtaining a quantitative stability result for the fractional Serrin problem. The result itself will rely on several auxiliary results of independent interest, such as
a new general quantitative nonlocal maximum principle for antisymmetric functions, which will lead to new quantitative nonlocal versions of both the Hopf lemma and the Serrin corner point lemma.
Also, a very fine analysis for a barrier function will be put forth, relying on suitable asymptotics of special functions and bespoke geometric estimates.

Specifically, the setting that we consider here is as follows.

\medskip

Let~$n$ be a positive integer and~$ \Om $ be an open bounded subset of~$\R^n$ with smooth boundary.
For~$s \in (0,1)$, let~$(-\Delta)^s$ denote the fractional Laplacian defined by 
\begin{equation}\label{eq:def fractional Laplacian}
(-\Delta)^s u(x) = c_{n,s} \, \PV \int_{\R^n} \frac{u(x)-u(y)}{\vert x - y \vert^{n+2s}} \dd y ,
\end{equation}
where~$\PV$ denotes the Cauchy principal value and~$c_{n,s}$ is a positive normalization constant.

Let~\(u\) be a solution (in an appropriate sense) of the equation
\begin{equation}\label{eq:Dirichlet problem}
\begin{cases}
(-\Delta)^s u =f(u) \quad & \text{ in } \Omega ,\\
u=0 \quad & \text{ in } \R^n \setminus \Omega, \\
u \geq 0 \quad & \text{ in } \R^n ,
\end{cases}
\end{equation}
where~$f:\R \to \R$ is a locally Lipschitz function satisfying~$f(0)\geq 0$.

Setting~$\de(x)$ to be a smooth function in~$\Om$ that coincides with~$\mathrm{dist}(x, \R^n \setminus \Om)$ in a neighborhood of~$\pa\Om$, we will assume that the solution~$u$ to~\eqref{eq:Dirichlet problem} is such that
\begin{equation}\label{eq:higher boundary regularity}
\frac{u}{\de^s} \in C^1 (\ol{\Om}).
\end{equation}
Such an assumption is automatically satisfied whenever~$\Om$ and~$f$ are smooth enough: indeed, \cite{grubb2014local, grubb2015fractional} prove that if~$\Om$ is smooth and~$f \in C^{\ga}$ then~$\frac{u}{\de^s} \in C^{s+\ga} (\ol{\Om})$ for all~$\ga>0$ (see also~\cite{ros2016boundary} where the~$C^{s+\ga} (\ol{\Om})$ regularity is achieved for~$C^{2,\ga}$ domains, for~$\ga$ small).

We stress that, for~$s=1$, \eqref{eq:higher boundary regularity} recovers the assumption~$u\in C^2(\ol{\Om})$ required in the classical proof established by Serrin via the method of moving planes: in particular, that assumption was needed to obtain a contradiction to Serrin corner lemma. In the same way, we believe that, in the nonlocal setting~$s\in(0,1)$, the assumption in~\eqref{eq:higher boundary regularity} is needed to obtain a contradiction to the nonlocal version of Serrin corner lemma and hence the rigidity result (see the second case in the proof of the
forthcoming \thref{lem:symmetric difference}).

For~$x\in\pa\Om$, we define the fractional normal derivative of~$u$ at~$x\in\pa\Om$ as
\begin{equation}\label{eq:def fracnormal derivative}
\pa_\nu^s u (x):= \lim_{t \to 0^+ } \frac{u(x) - u(x - t \nu(x))}{t^s},
\end{equation}
where~$\nu$ denotes the exterior unit normal to~$\pa\Om$.

The main result in~\cite{MR3395749} states that, if
the overdetermined problem given by~\eqref{eq:Dirichlet problem} and the Serrin-type condition
\begin{align}
\pa_\nu^s u = \text{constant} \quad \text{ on } \partial \Om \label{eq:overdetermination}
\end{align}
admits a solution, then~\(\Omega\) is necessarily a ball.
In this paper, we obtain quantitative stability for such a rigidity result.

We measure how close~\(\Omega\) is to being a ball with the quantity
\begin{equation*}
\rho(\Omega) := \inf \big\{  \rho_e - \rho_i \text{ : } B_{\rho_i} (x) \subset \Omega \subset B_{\rho_e} (x) \text{ for some } x \in \Omega \big\},
\end{equation*}
and how close~\( \pa_\nu^s u\) is to being constant on~\(\partial \Omega\) with the Lipschitz semi-norm 
\begin{equation}\label{eq:def Lipschitz seminorm}
[ \pa_\nu^s u]_{\pa \Om} := \sup_{\substack{x,y\in \pa \Om \\ x\neq y}}\frac{\vert \pa_\nu^s u(x) - \pa_\nu^s u(y) \vert}{\vert x - y \vert }.
\end{equation}

Moreover, we consider the following weighted~$L^1$-norm
\begin{equation}\label{semiliweigh}
\| u \|_{L_s(\R^n)}  = \int_{\R^n} \frac{ \vert u(x) \vert }{ 1+ \vert x \vert^{n+2s} } \dd x .\end{equation}

Then, our main result is the following:

\begin{thm} \thlabel{thm:Main Theorem}
Let~\( \Om \) be an open bounded subset of~$ \R^n$ of class~$C^2$ satisfying the uniform interior sphere condition\footnote{As usual, we say that~$\Om$ satisfies the \emph{uniform interior sphere condition with radius~$r_\Om$} if for any point~$x\in\pa\Om$ there exists a ball~$B_{r_\Om}\subset\Om$ of radius~$r_\Om$ such that~$\pa\Om \cap \pa B_{r_\Om} = \left\lbrace x \right\rbrace$.} with radius~$ r_\Om >0$.
Let~$f \in C^{0,1}_{\mathrm{loc}}(\R)$ be such that~$f(0)\geq 0$.
Let~\(u\) be a weak solution of~\eqref{eq:Dirichlet problem} satisfying~\eqref{eq:higher boundary regularity}. 

Define
\begin{equation}\begin{split}\label{eq:def R}
&R:= \min \left\lbrace r_\Om,  \frac{  \ka_{n,s}^{\frac{1}{2s}} |B_1|^{-\frac{1}{n}} }{[f]_{C^{0,1}([0,\|u\|_{L^\infty(\Omega)}])}^{\frac{1}{2s}} } \right\rbrace \\
{\mbox{with }} \qquad
& \ka_{n,s} := 
\frac{n}{2^{1-2s}} |B_1|^{1+2s/n} (1-s) \pi^{-n/2} \frac{\Ga(\frac{n}{2} +s )}{ \Ga(2+s) }
.\end{split}\end{equation}

Then,
\begin{equation}\label{eq:main stability estimate}
\rho(\Omega) \leq C [ \pa_\nu^s u]_{\partial \Om}^{\frac 1 {s+2}} 
\end{equation}
with 
\begin{equation*}
C := C(n,s) \left[1+
\frac{ 1 }{ r_\Om^{1-s}  \left( f(0)+\|u\|_{L_s(\R^n)} \right) } \right]
\left( \frac{\diam \Omega}{R} \right)^{2n+3+6s} 
R^{n+2-s} .
\end{equation*}
\end{thm}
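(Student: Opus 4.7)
The plan is to perform a quantitative version of the method of moving planes. Fix a direction $e\in \Sph^{n-1}$. For $\la\in\R$, define the hyperplane $T_\la=\{x\cdot e=\la\}$, the reflection $Q_\la(x)=x+2(\la-x\cdot e)e$, the cap $\Omega_\la=\Omega\cap\{x\cdot e>\la\}$, and the reflected solution $u_\la(x)=u(Q_\la(x))$. The antisymmetric function $w_\la=u_\la-u$ satisfies a linear equation $(-\Delta)^s w_\la+c_\la(x)\,w_\la=0$ in $\Omega_\la$, where $|c_\la|\le [f]_{C^{0,1}([0,\|u\|_{L^\infty}])}$. I would select $\la^\ast=\la^\ast(e)$ as the critical value obtained by the standard moving plane procedure, starting from $\la$ large and decreasing it as long as $Q_\la(\Omega_\la)\subset\Omega$ and $w_\la\ge 0$ in $\Omega_\la$; the choice of $R$ in~\eqref{eq:def R} is exactly what is needed to run the initial antisymmetric maximum principle on thin caps via the barrier announced in the introduction.

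Once at the critical plane, I would split into the two geometric scenarios from the classical Serrin proof. In the first case (the reflected cap is internally tangent to $\pa\Omega$ at a point away from $T_{\la^\ast}$), the quantitative nonlocal Hopf lemma stated earlier in the paper yields, in the rigid limit, a strict positivity of $\pa_\nu^s w_{\la^\ast}$; combined with the Serrin overdetermination~\eqref{eq:overdetermination}, this provides a quantitative upper bound for $\|w_{\la^\ast}\|_{L^\infty(\Omega_{\la^\ast})}$ by a power of the Lipschitz deficit $[\pa_\nu^s u]_{\pa\Om}$. In the second case (the cap meets $\pa\Omega$ tangentially on $T_{\la^\ast}$), the quantitative nonlocal corner point lemma, whose applicability is ensured exactly by~\eqref{eq:higher boundary regularity}, plays the analogous role. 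In both cases, the explicit constants from those lemmas produce a pointwise control $|w_{\la^\ast}|\le C\,[\pa_\nu^s u]_{\pa\Om}^{\al}$ on $\Omega_{\la^\ast}$, with the dependence on $\diam\Omega/R$, $r_\Om$, $f(0)$, and $\|u\|_{L_s(\R^n)}$ tracing directly back to the constants in the general antisymmetric maximum principle.

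The pointwise control on $w_{\la^\ast}$ is then transferred to a measure estimate on the symmetric difference $|\Om\triangle Q_{\la^\ast}(\Om)|$ via the preceding \thref{lem:symmetric difference}: if $w_{\la^\ast}$ were identically zero we would have exact symmetry, and a quantitative version yields a power of $[\pa_\nu^s u]_{\pa\Om}$ on the right-hand side. A standard geometric argument, applied to a suitable family of directions $e$, converts these per-direction near-symmetries into the two-radii estimate~\eqref{eq:main stability estimate}: the inner and outer radii of $\Omega$ about an appropriate center differ by a power of the Neumann deficit. The hard part is to obtain the sharp exponent $1/(s+2)$; this hinges on the precise asymptotics of the new antisymmetric barrier, which must simultaneously reproduce the $\de^s$-type fractional Hopf decay near $\pa\Omega$ and satisfy the antisymmetric maximum principle at the critical plane. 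Balancing the $s$-power boundary behavior against the linear Lipschitz deficit, and tracking the loss in the inversion from symmetric-difference measure to $\rho(\Omega)$, is where the exponent $1/(s+2)$ arises, and this is the main technical obstacle of the argument.
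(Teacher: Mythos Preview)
Your outline has the right architecture (moving planes, two cases at the critical position, Hopf/corner lemmas, then symmetric-difference control, then geometry), but the logical direction of the key estimate is inverted, and this is a genuine gap.

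You claim that the quantitative Hopf and corner lemmas yield an $L^\infty$ bound on $w_{\la^\ast}$ in terms of $[\pa_\nu^s u]_{\pa\Om}$, and that this pointwise control is then fed into \thref{lem:symmetric difference}. That is not how those lemmas work. Look at \eqref{eq:Hopf} and \eqref{eq:Serrin corner lemma}: they give a \emph{lower} bound on the boundary behaviour of $v=w_{\la^\ast}$ (namely on $\pa_\nu^s v(p)/\de_{\pi_{\la^\ast}}(p)$ in case~(i), and on $v(t\eta)/t^{1+s}$ in case~(ii)) in terms of the weighted $L^1$ quantity $\|\de_{\pi_{\la^\ast}} v\|_{L^1(K)}$ with $K=(\Om\cap H'_{\la^\ast})\setminus\Om'_{\la^\ast}$. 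At the touching/corner point the \emph{left-hand side} is controlled from above by $[\pa_\nu^s u]_{\pa\Om}$ (using that $\pa_\nu^s v(p)=\pa_\nu^s u(p)-\pa_\nu^s u(p'_{\la^\ast})$ in case~(i), and the expansion of $u/\de^s$ enabled by \eqref{eq:higher boundary regularity} in case~(ii)). Combining, one obtains
\[
\int_{(\Om\cap H'_{\la^\ast})\setminus\Om'_{\la^\ast}} \de_{\pi_{\la^\ast}}(x)\,u(x)\,dx \;\le\; C\,[\pa_\nu^s u]_{\pa\Om},
\]
which is precisely the content of \thref{lem:symmetric difference}. So that lemma is not a tool that \emph{converts} pointwise control into a measure estimate; it \emph{is} the weighted integral estimate, and the Hopf/corner lemmas are already baked into its proof. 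No $L^\infty$ bound on $w_{\la^\ast}$ is ever obtained or needed.

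Consequently your account of the exponent is also off. The estimate in \thref{lem:symmetric difference} is linear in the deficit; the exponent $1/(s+2)$ appears only in the next step (\thref{prop:vecchia prop 4.4}), when the weighted $L^1$ bound on $u$ over the ``excess'' region is converted into a bound on $|\Om\triangle\Om'|$. That conversion uses a lower bound of the type $u\gtrsim \de_{\pa\Om}^s$ (coming from the equation and the interior sphere condition, and responsible for the factors $r_\Om^{1-s}$ and $f(0)+\|u\|_{L_s(\R^n)}$ in the constant) together with a geometric slicing argument; the exponent $\frac{1}{s+2}$ is the outcome of balancing the $\de^s$ growth of $u$ against the weight $\de_{\pi_{\la^\ast}}$ in that step, not of ``inverting symmetric-difference measure to $\rho(\Om)$'' as you write. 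After that, the passage from per-direction $|\Om\triangle\Om'|$ control to $|\la_e|$ and then to $\rho(\Om)$ is the standard quantitative moving-planes machinery and incurs no further loss of exponent.
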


It is worth noting that the stability estimate in~\eqref{eq:main stability estimate} is
even better than those obtained so far in the classical setting (i.e., for~$s=1$) via the method of moving planes. Indeed, \eqref{eq:main stability estimate} gives H\"older-type stability, which is better than the logarithmic-type stability achieved in~\cite{MR1729395}. We mention that, for~$s=1$, the estimate in~\cite{MR1729395} was improved to H\"older stability in~\cite{MR3522349} at the cost of restricting the analysis to a particular class of domains (which includes convex domains). We stress that, in contrast with~\cite{MR3522349}, \eqref{eq:main stability estimate} holds without restricting the class of domains considered; moreover, in the formal limit for~$s \to 1$, the stability exponent in~\eqref{eq:main stability estimate} is better than that obtained in~\cite{MR3522349}.

We mention that in the particular case~$s=1$ and~$f \equiv const.$ other stability results have been established with alternative approaches to the method of moving planes; these are mainly inspired by the approach towards Serrin's symmetry result pioneered by Weinberger in~\cite{MR333221} and can be found, e.g., in~\cite{brandolini2008stability, feldman2018stability, MR4124125, MR4054869, MP2023interpolating}.
\medskip

The quantitative version of the fractional Serrin problem has remained completely untouched, though several related problems have been considered under various perspectives: these include the cases in which the Neumann condition in the fractional Serrin problem is replaced by a parallel surface condition,
see~\cite{MR4577340, RoleAntisym2022, DPTVParallelStability},
and some of the methodologies share some features with quantitative versions of fractional geometric problems, such as~\cite{MR3836150}.

We stress that our result is new even in the case~$f \equiv const.$

\begin{rem}
Leveraging the fine analysis provided in \cite[Sections 6 and 1.3]{DPTVParallelStability} -- in particular, reasoning as in \cite[Theorem 6.9]{DPTVParallelStability} -- the stability exponent $1/(s+2)$ of \thref{thm:Main Theorem} may be improved to $\al / ( 1 +\al(s+1) )$, provided that $\Om$ is of class $C^\al$ for $\al>1$. In this case, the inequality
$$
\rho(\Omega) \leq C [ \pa_\nu^s u]_{\partial \Om}^{\al / ( 1 +\al(s+1) )}
$$
holds true with a constant $C$ depending on the same parameters of the constant appearing in \thref{thm:Main Theorem}
%
%
as well as on the $C^\al$ regularity of $\Om$.
\end{rem}

\smallskip

A crucial ingredient to establish \thref{thm:Main Theorem} is the following result, which provides a new powerful antisymmetric barrier that allows a unified treatment of the method of the moving planes. Such a barrier will allow us to establish a new general nonlocal maximum principle (\thref{Quantitative Maximum Principle}) that leads to quantitative versions (Theorems~\ref{thm:quantitative Hopf} and~\ref{thm:quantitative corner lemma}) of both the nonlocal Hopf-type lemma and the nonlocal Serrin corner point lemma established in Proposition~3.3 and Lemma~4.4
of~\cite{MR3395749}.

\begin{thm}\thlabel{prop:newantisymmetricbarrier}
Let~\(s\in (0,1)\), \(n\) be a positive integer, \( a = (a_1 , \dots , a_n) \in \R^n \) with~$a_1>0$, and~\(\rho>0\).
For~$x_0 \in \R^n$, consider
$$
\psi_{B_\rho (x_0)} := \ga_{n,s} \left( \rho^2 - |x-x_0|^2 \right)_+^s \quad \text{ where } \quad
\ga_{n,s}:= \frac{4^{-s} \Ga(\frac{n}{2})}{\Ga(\frac{n}{2}+s)(\Ga(1+s))} ,
$$
and set
\begin{align}\label{eq:def new barrier}
\varphi(x):= x_1  \big (  \psi_{B_\rho(a)} (x)+ \psi_{B_\rho(a_\ast)} (x)  \big ) ,
\end{align}
where~$a_\ast:= a - a_1 e_1$ is the reflection of~$a$ across~$\left\lbrace x_1 = 0 \right\rbrace$.

Then, we have that 
\begin{align}
(-\Delta)^s \varphi (x) \leqslant \frac{n+2s}n  x_1 \qquad \text{in } B_\rho^+(a) \setminus \partial B_\rho(a_\ast). \label{eTrwiP0v}
\end{align} 
\end{thm}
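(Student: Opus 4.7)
The plan is to compute $(-\Delta)^s\varphi$ explicitly by decomposing $\varphi = \varphi_a + \varphi_{a_*}$ with $\varphi_b := x_1\,\psi_{B_\rho(b)}$, and invoking the Dyda-type formula for the fractional Laplacian of a harmonic polynomial times the ball torsion. Writing $x_1 = (x_0)_1 + (x_1-(x_0)_1)$ and applying Dyda's identity separately to the degree-$0$ factor (yielding $(-\Delta)^s\psi_{B_\rho(x_0)} = 1$ on $B_\rho(x_0)$ by the very definition of $\ga_{n,s}$) and to the degree-$1$ harmonic factor $x_1-(x_0)_1$ (yielding $(-\Delta)^s((x_1-(x_0)_1)\psi_{B_\rho(x_0)}) = \frac{n+2s}{n}(x_1-(x_0)_1)$ on $B_\rho(x_0)$, with $(n+2s)/n$ arising as the ratio of Dyda's eigenvalues for polynomial degrees $1$ and $0$), I would arrive at the clean interior identity
\[
(-\Delta)^s(x_1 \psi_{B_\rho(x_0)})(x) = \tfrac{n+2s}{n}\,x_1 - \tfrac{2s}{n}\,(x_0)_1 \qquad \text{on } B_\rho(x_0).
\]

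Since $B_\rho^+(a) \subset B_\rho(a)$, this applied to $\varphi_a$ gives immediately $(-\Delta)^s\varphi_a(x) = \tfrac{n+2s}{n}x_1 - \tfrac{2s}{n}a_1$. The treatment of $\varphi_{a_*}$ is governed by whether $x$ lies inside or outside $\ol{B_\rho(a_*)}$, and this is precisely why $\pa B_\rho(a_*)$ must be removed in the statement: the fractional Laplacian of a $\mathrm{dist}^s$-type function typically jumps across the boundary of its support. For $x \in B_\rho(a_*)$ the interior identity applies verbatim with $x_0 = a_*$; for $x$ outside $\ol{B_\rho(a_*)}$, $\varphi_{a_*}(x) = 0$ and the fractional Laplacian collapses to the explicit sign-definite integral
\[
(-\Delta)^s\varphi_{a_*}(x) = -c_{n,s}\int_{B_\rho(a_*)}\frac{y_1\,\psi_{B_\rho(a_*)}(y)}{|x-y|^{n+2s}}\,dy.
\]

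Summing the two contributions reduces \eqref{eTrwiP0v} to one check per geometric case: an algebraic inequality between affine expressions in $x_1$ in the interior case, and an integral estimate in the exterior case. For the latter, the natural strategy is to exploit the reflection symmetry exchanging the two balls $B_\rho(a)$ and $B_\rho(a_*)$: split $B_\rho(a_*)$ along the reflection hyperplane, change variables in one half to pair points with their mirror images, and use the pointwise kernel inequality $|x-y^*|\ge|x-y|$ (valid whenever $x$ lies on the favourable side of the hyperplane) together with the explicit sign of the first-coordinate moment of $\psi_{B_\rho(a_*)}$ to extract the required upper bound on the cross integral.

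The main obstacle is precisely this exterior step: showing that the cross integral is dominated by $\tfrac{2s}{n}a_1$ so that, after adding $(-\Delta)^s\varphi_a(x)$, the total remains below $\tfrac{n+2s}{n}\,x_1$. Because the two balls $B_\rho(a)$ and $B_\rho(a_*)$ are not placed in a way that would render the full integrand odd, the symmetrization leaves a residue whose sign must be tracked carefully using the specific geometry of $a$, $a_*$ and $\rho$. Controlling this residue is the technical heart of the construction and the reason why combining \emph{two} translated torsion balls (rather than a single one) delivers a uniformly valid antisymmetric barrier suited to the unified moving planes argument that the paper subsequently builds.
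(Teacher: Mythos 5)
Your decomposition matches the paper's: you write $\varphi=\varphi_a+\varphi_{a_*}$, use Dyda's identity to obtain $(-\Delta)^s(x_1\psi_{B_\rho(x_0)})=\tfrac{n+2s}{n}x_1-\tfrac{2s}{n}(x_0)_1$ inside $B_\rho(x_0)$, and correctly reduce \eqref{eTrwiP0v} in $B_\rho^+(a)\setminus\overline{B_\rho(a_*)}$ to showing that $(-\Delta)^s\varphi_{a_*}(x)\le\tfrac{2s}{n}a_1$. But the argument you offer for this last step is not a proof, and the obstacle is worse than you acknowledge. The integral $-c_{n,s}\int_{B_\rho(a_*)}y_1\psi_{B_\rho(a_*)}(y)\,|x-y|^{-n-2s}\,dy$ is \emph{not} sign-definite when $a_1<\rho$: the integrand $y_1\psi_{B_\rho(a_*)}(y)$ changes sign across $\{y_1=0\}$, and $\psi_{B_\rho(a_*)}$ is symmetric about the hyperplane $\{y_1=-a_1\}$, not about $\{y_1=0\}$, so the reflection $y\mapsto Q_{\{y_1=0\}}(y)$ does not preserve the weight (it swaps $\psi_{B_\rho(a_*)}$ with $\psi_{B_\rho(a)}$). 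Symmetrizing about $\{y_1=-a_1\}$ instead pairs $y$ with $\tilde y$ satisfying $y_1+\tilde y_1=-2a_1$, which leaves, after the kernel comparison $|x-\tilde y|\ge|x-y|$, a difference of two competing terms of opposite sign; no elementary monotonicity gives the sharp constant $\tfrac{2s}{n}a_1$. You flag this as ``the technical heart'' but do not supply the estimate. The paper actually carries it out by a completely different route: explicit exterior formulas for $(-\Delta)^s\psi$ and $(-\Delta)^s(x_1\psi)$ in terms of ${}_2F_1$ (Lemmata~\ref{P7i5W6Bg} and~\ref{Fov7scJw}), followed by Gauss summation, Euler's transformation, and contiguous relations, culminating in the two inequalities $f(\tau)\le 1$, $g(\tau)\le 0$ of Lemma~\ref{TxksrRDI}. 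That lemma is precisely the sharp bound your sketch leaves unproved, so the proposal has a genuine gap at the decisive step.

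Two smaller remarks. First, your ``interior'' case $x\in B_\rho^+(a)\cap B_\rho(a_*)$ in fact produces $(-\Delta)^s\varphi(x)=\tfrac{2(n+2s)}{n}x_1$, which \emph{exceeds} $\tfrac{n+2s}{n}x_1$ when $x_1>0$; this is not ``an algebraic inequality between affine expressions'' that closes trivially (the paper has the same overstatement, but only ever invokes the barrier in $B^+\setminus\overline{B_*}$, where the issue does not arise). Second, describing the exterior integral as ``sign-definite'' is incorrect for $a_1<\rho$, as explained above, so the sign-based shortcut you gesture at is not available.
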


Here above and in the rest of the paper, we denote by~$B_\rho^+(a):=B_\rho(a)\cap \{x_1>0\}$.

\smallskip 

The paper is organised as follows.
In Section~\ref{sec:tools}, we establish \thref{prop:newantisymmetricbarrier}, the new quantitative nonlocal maximum principle (\thref{Quantitative Maximum Principle}), and the quantitative nonlocal versions of Hopf lemma and Serrin corner point lemma (\thref{thm:quantitative Hopf} and \thref{thm:quantitative corner lemma}). These are the crucial tools to achieve \thref{thm:Main Theorem} and are of independent interest. In particular, Section~\ref{subsec:new barrier} is devoted to establish \thref{prop:newantisymmetricbarrier}, which is then used in 
Section~\ref{subsec:Hopf e Serrin Corner} to achieve \thref{Quantitative Maximum Principle}, \thref{thm:quantitative Hopf}, and \thref{thm:quantitative corner lemma}.
Finally, in Section~\ref{sec:main result}, we complete the proof of \thref{thm:Main Theorem}.

\section{Quantitative nonlocal versions of Hopf lemma and Serrin corner point lemma}\label{sec:tools}
\subsection{Notation and setting}
In this section, we fix the notation and provide some relevant definitions.
For~$s\in (0,1)$ and~$n \geq 1$, 
we denote by
\begin{equation*}
[u ]_{H^s(\R^n)} := c_{n,s} \iint_{\R^{2n}} \frac{\vert u(x) - u(y) \vert^2}{\vert x-y\vert^{n+2s}}  \dd x \dd y 
\end{equation*}
the Gagliardo semi-norm of~$u$ and by
\begin{equation*}
H^s(\R^n)  := \big\{ u \in L^2(\R^n) \text{ such that } [u]_{H^s(\R^n)} < +\infty \big\}
\end{equation*}
the fractional Sobolev space.
The positive constant~$c_{n,s}$ is the same appearing in the definition of the fractional Laplacian in~\eqref{eq:def fractional Laplacian}. 

Functions in~$H^s(\R^n)$ that are equal almost everywhere are identified. 

The bilinear form~$\mathcal E : H^s(\R^n) \times H^s(\R^n) \to \R$ associated with~$[\cdot ]_{H^s(\R^n)}$ is given by
\begin{equation*}
\mathcal E(u,v) = \frac{c_{n,s}}{2} \iint_{\R^{2n}} \frac{( u(x) - u(y) )(v(x)- v(y)) }{\vert x-y\vert^{n+2s}} \dd x \dd y .
\end{equation*}
Moreover, recalling~\eqref{semiliweigh}, we define the space
\begin{align*}
L_s(\R^n)  := \big\{ L^1_{\mathrm{loc}} (\R^n) \text{ such that } \| u\|_{L_s(\R^n)} <+\infty \big\}. 
\end{align*}

Let~$ \Omega$ be an open, bounded subset of~$\R^n$ and define the space
\begin{equation*}
\mathcal H^s_0(\Omega) := \big\{ u \in H^s(\R^n) \text{ such that } u =0 \text{ in } \R^n \setminus \Omega \big\}. 
\end{equation*}
Consider~$c\in L^\infty(\Omega)$ and~$g\in L^2(\Omega)$. We say that a function~$u\in L_s(\R^n) \cap H^s(\R^n) $ is a \emph{weak supersolution} of ~$(-\Delta)^s u + c  u = g$ in~$\Omega$, or, equivalently, that~$u$ satisfies
\begin{equation}\label{eq:def supersol 1}
(-\Delta)^s u + c  u \geq g\quad {\mbox{ in }}\Omega
\end{equation} 
in the \emph{weak sense}, if
\begin{equation}\label{eq:def supersol 2}
\mathcal E(u,v) + \int_\Omega c uv \dd x \geq \int_\Omega g v \dd x  ,
\quad \text{ for all } v\in \mathcal H^s_0(\Omega)\text{ with } v\geq 0 .
\end{equation}
Similarly, the notion of \emph{weak subsolution} (respectively, \emph{weak solution}) is understood by replacing the~$\geq$ sign in~\eqref{eq:def supersol 1}-\eqref{eq:def supersol 2} with~$\leq$ (respectively, $=$).
Notice that we are assuming a priori that weak solutions are in~$L_s(\R^n)$.  
\smallskip

Setting~$Q_T : \R^n \to \R^n$ to be the function that reflects~$x$ across the plane~$T$, we say that a function~$v: \R^n \to \R$ is \emph{antisymmetric with respect to~$T$} if
\begin{equation*}
v(x)  = - v(Q_T(x)) \quad \text{ for all } x \in \R^n .
\end{equation*}
To simplify the notation, we sometimes write~\( x_* \) to denote~$Q_T(x)$ when it is clear from context what~$T$ is.

Often, we will only need to consider the case~$T= \left\lbrace x_1=0 \right\rbrace $, in which case, for~$x=(x_1, \dots,x_n) \in \R^n$, we have that~$x_* = Q_T(x) = x - 2 x_1 e_1 $. 
For simplicity, we will refer to~$v$ as \emph{antisymmetric} if it is antisymmetric with respect to~$ \left\lbrace x_1=0 \right\rbrace $. 

We set
$$\R^n_+:= \big\{ x=(x_1,\dots,x_n)\in\R^n \, \text{ : } \, x_1>0 \big\} $$ and, for any~$A\subseteq\R^n$, we define~$A^+:= A\cap\R^n_+$.

As customary, we also denote by~$u^\pm$ the positive and negative parts of a function~$u$, that is
$$ u^+(x):=\max\{ u(x),0\} \qquad{\mbox{and}}\qquad
u^-(x):=\max\{- u(x),0\}.$$

\smallskip

Given~$ A \subset \R^n\), we define the characteristic function~$\chf_A : \R^n \to \R $ of~$A$ and the distance function~$\delta_A : \R^n \to [0,+\infty]$ to~\(A\) as
\begin{equation*}
\chf_A(x) :=
\begin{cases}
1, &\text{ if } x \in A , \\
0, &\text{ if } x \not \in A ,
\end{cases}
\quad \qquad \text{and} \quad \qquad
\delta_A(x) := \inf_{y \in A} \vert x-y\vert. 
\end{equation*}

Given an open bounded smooth set~$A\subset\R^n$, $\psi_A$ will denote the (unique) function in~$C^s(\R^n) \cap C^\infty(A)$ satisfying
\begin{equation}\label{eq:torsion in A}
\begin{cases}
(-\Delta)^s \psi_A =1 \quad & \text{in } A ,\\
\psi_A =0  \quad & \text{in } \R^n \setminus A . 
\end{cases}
\end{equation} 
For~$A=B_{\rho}(x_0)$, \cite{getoor1961first, bogdan2010heat} show that the explicit solution of~\eqref{eq:torsion in A} is
\begin{equation*}
\psi_{B_\rho (x_0)}:= \ga_{n,s} \left( \rho^2 - |x-x_0|^2 \right)_+^s \quad \text{ with } \quad
\ga_{n,s}:= \frac{4^{-s} \Ga(\frac{n}{2})}{\Ga(\frac{n}{2}+s)(\Ga(1+s))} .
\end{equation*}

Finally, \(\lambda_1(A)\) will denote the first Dirichlet eigenvalue of the fractional Laplacian, that is,
\begin{equation*}
\lambda_1(A) = \min_{u \in \mathcal H^s_0 (A)} \frac{\mathcal{E}(u,u)}{\displaystyle \int_{A} u^2 dx} ,
\end{equation*}
and we will use that
\begin{equation}\label{eq:Faber-Krahn}
\lambda_1(A) \ge \ka_{n,s} |A|^{- \frac{2s}{n}} \quad \text{ with } \quad
\ka_{n,s} := 
\frac{n}{2^{1-2s}} |B_1|^{1+2s/n} (1-s) \pi^{-n/2} \frac{\Ga(\frac{n}{2} +s )}{ \Ga(2+s) }
\end{equation}
for any open bounded set~$A$, see, e.g., \cite{MR3395749,yildirim2013estimates}
(see also~\cite{brasco2020quantitative}).

\subsection{A new antisymmetric barrier: proof of \texorpdfstring{\thref{prop:newantisymmetricbarrier}}{Proposition 1.3}}\label{subsec:new barrier}

This section is devoted to the proof of \thref{prop:newantisymmetricbarrier}.
To this aim, we first prove the following three lemmata.

\begin{lem} \thlabel{P7i5W6Bg}
Let~\(s\in (0,1)\), \(n\) be a positive integer, and
$$ \psi(x) := \gamma_{n,s} (1-\vert x \vert^2)^s_+ \qquad {\mbox{ with }}\quad \gamma_{n,s}:=\frac{4^{-s} \Gamma(n/2)}{\Gamma( \frac{n+2s} 2  ) \Gamma(1+s)}.$$ 

Then, \begin{align*}
(-\Delta)^s \psi(x) &= \begin{cases}
1, &\text{if } x\in B_1, \\
{\displaystyle -a_{n,s} \vert x \vert^{-n-2s } {}_2F_1 \bigg ( \frac{n+2s} 2  , s+1 ; \frac{n+2s}2+1 ; \vert x \vert^{-2} \bigg )   } , & \text{if } x\in \R^n \setminus \overline {B_1}, \\
-\infty, &\text{if } x\in \partial B_1 ,
\end{cases}
\end{align*} where 
$$ a_{n,s}:=  \frac { s\Gamma(n/2)  } {\Gamma  ( \frac{n+2s}2 +1   )\Gamma  ( 1-s)} .$$
\end{lem}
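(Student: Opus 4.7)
The plan is to handle separately the three cases $x\in B_1$, $x\in\pa B_1$, and $x\in\R^n\setminus\ol{B_1}$.

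\textbf{Interior case.} The identity $(-\Delta)^s\psi=1$ on $B_1$ is precisely Getoor's classical formula: $\psi$ has been normalized by the constant $\ga_{n,s}$ exactly so as to be the fractional torsion function of the unit ball (this is the statement cited just before the lemma from \cite{getoor1961first, bogdan2010heat} in the recalled formula for $\psi_{B_\rho(x_0)}$). So this case requires no new work beyond invoking that result.

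\textbf{Boundary case.} For $x\in\pa B_1$ we have $\psi(x)=0$, and since $\psi\geq 0$ on $\R^n$, there is no sign cancellation and the principal value reduces to
\[
(-\Delta)^s\psi(x) = -c_{n,s}\int_{B_1}\frac{\psi(y)}{|x-y|^{n+2s}}\,dy.
\]
The explicit formula gives $\psi(y)\asymp (1-|y|)^s$ near $\pa B_1$, and on a small inward cone $C_\epsilon\subset B_1$ with vertex at $x$ one has $(1-|y|)\asymp|x-y|$, so the integrand behaves like $|x-y|^{-n-s}$. The integral over $C_\epsilon$ thus diverges, and together with nonnegativity of the integrand on all of $B_1$ this gives $(-\Delta)^s\psi(x)=-\infty$.

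\textbf{Exterior case.} For $x\in\R^n\setminus\ol{B_1}$, once again $\psi(x)=0$ and the singularity of $|x-y|^{-n-2s}$ is not met in $B_1$, so
\[
(-\Delta)^s\psi(x) = -c_{n,s}\,\ga_{n,s}\int_{B_1}\frac{(1-|y|^2)^s}{|x-y|^{n+2s}}\,dy.
\]
Setting $R:=|x|>1$, I would pass to polar coordinates $y=r\omega$ and reduce the spherical integral to a one-dimensional one in $u=\langle\omega,x/R\rangle\in[-1,1]$. Factoring out $R^{-n-2s}$, the $u$-integral is evaluated by the classical identity
\[
\int_{-1}^1(1-u^2)^{\nu-1/2}(1-2\rho u+\rho^2)^{-\mu}\,du = B\bigl(\nu+\tfrac12,\tfrac12\bigr)\,{}_2F_1\bigl(\mu,\mu-\nu;\nu+1;\rho^2\bigr),
\]
applied with $\nu=(n-2)/2$, $\mu=(n+2s)/2$, $\rho=r/R$; this produces ${}_2F_1\bigl(\tfrac{n+2s}{2},s+1;\tfrac{n}{2};r^2/R^2\bigr)$. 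The remaining $r$-integral, after the substitution $t=r^2$, is of the shape
\[
\int_0^1 t^{c-1}(1-t)^s\,{}_2F_1(a,b;c;\lambda t)\,dt \quad \text{with } c=\tfrac{n}{2},\ a=\tfrac{n+2s}{2},\ b=s+1,\ \lambda=R^{-2},
\]
which by the standard Beta-type reduction (the $c$-parameters in numerator and denominator of the resulting ${}_3F_2$ cancel) collapses to $B\bigl(\tfrac{n}{2},s+1\bigr)\,{}_2F_1\bigl(\tfrac{n+2s}{2},s+1;\tfrac{n+2s}{2}+1;R^{-2}\bigr)$, which matches the hypergeometric factor in the statement.

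The main obstacle is constant tracking: combining $c_{n,s}$, $\ga_{n,s}$, $\tfrac{1}{2}|\Sph^{n-2}|$, and the two Beta factors $B\bigl(\tfrac{n-1}{2},\tfrac{1}{2}\bigr)$ and $B\bigl(\tfrac{n}{2},s+1\bigr)$, and applying the reflection identity $\Ga(s)\Ga(1-s)=\pi/\sin(\pi s)$ together with the explicit form of $c_{n,s}$, in order to reduce the overall prefactor to exactly $a_{n,s}=\tfrac{s\,\Ga(n/2)}{\Ga((n+2s)/2+1)\,\Ga(1-s)}$. Recognizing in advance the correct contiguous form of ${}_2F_1$ — so that one indeed lands on the parameters $(\tfrac{n+2s}{2},s+1;\tfrac{n+2s}{2}+1)$ stated in the lemma rather than on a contiguous variant — is where careful bookkeeping is required.
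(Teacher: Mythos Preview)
Your treatment of the interior case (citing Getoor) and the boundary case (divergence on an inward cone where $\psi(y)\asymp|x-y|^s$) matches the paper's proof essentially verbatim.

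For the exterior case your route is genuinely different. The paper does not integrate directly in polar coordinates; instead it invokes a Hankel-transform representation of $(-\Delta)^s$ acting on radial functions (a lemma from \cite{MR3916700}), which turns the problem into evaluating
\[
\int_0^\infty t^s J_{\frac n2-1}(t)\,J_{\frac n2+s}(|x|^{-1}t)\,dt
\]
via the Weber--Schafheitlin formula, landing directly on ${}_2F_1\bigl(\tfrac{n+2s}{2},s+1;\tfrac{n+2s}{2}+1;|x|^{-2}\bigr)$ with the constant $a_{n,s}$ falling out of the Bessel identities. Your approach---spherical average via the Gegenbauer-type identity giving ${}_2F_1\bigl(\tfrac{n+2s}{2},s+1;\tfrac{n}{2};r^2/R^2\bigr)$, followed by the radial Beta-integral that shifts the third parameter from $\tfrac{n}{2}$ to $\tfrac{n+2s}{2}+1$---is correct and more self-contained: it avoids importing the Hankel-transform lemma and the Weber--Schafheitlin table entry, at the price of the constant bookkeeping you flag. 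The paper's route outsources that bookkeeping to the Bessel literature, so the prefactor $a_{n,s}$ appears in one step rather than as a product of $c_{n,s}$, $\gamma_{n,s}$, $|\Sph^{n-2}|$, and two Beta factors. Either way the hypergeometric parameters and the final constant agree; your plan is sound, and carrying out the Gamma-function simplification you describe (using the explicit form $c_{n,s}=\tfrac{4^s\Gamma(n/2+s)}{\pi^{n/2}|\Gamma(-s)|}$ and the reflection formula) does collapse everything to $a_{n,s}$.
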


\begin{proof}
The identity~\((-\Delta)^s\psi =1 \) in~\(B_1\) is well-known in the literature, see~\cite[Section~2.6]{MR3469920}, \cite[Proposition~13.1]{MR3916700}, so we will focus on the case~\(x\in \R^n \setminus B_1\).

Let~\(x\in \R^n \setminus \overline{B_1}\). We have that~\(\psi(x) =\gamma_{n,s} \psi_0(\vert x \vert)\) with~\(\psi_0(\tau) :=  (1- \tau^2)^s_+\), so, by~\cite[Lemma~7.1]{MR3916700}, it follows that \begin{align*}
(-\Delta)^s \psi(x) &= \gamma_{n,s}\vert x \vert^{-\frac n 2 -2s-1} \int_0^\infty t^{2s+1} J_{\frac n 2 -1} (t) I(t) \dd t 
\end{align*} where \begin{align*}
I(t) &:= \int_0^\infty \tau^{\frac n 2 } \psi_0(\tau) J_{\frac n 2 -1 } (t \vert x \vert^{-1} \tau ) \dd \tau 
\end{align*} provided that both these integrals exist and converge.

To calculate~\(I(t)\) we make the change of variables, \(\tau = \sin \theta\) to obtain \begin{align*}
I(t) &= \int_0^1\tau^{\frac n 2 } (1-\tau^2)^s J_{\frac n 2 -1 } (t \vert x \vert^{-1} \tau ) \dd \tau \\
&= \int_0^{\frac \pi 2 }(\sin \theta)^{\frac n 2 } (\cos \theta )^{2s+1} J_{\frac n 2 -1 } (t \vert x \vert^{-1} \sin \theta ) \dd \theta . 
\end{align*} Next, we make use of the identity  \begin{align*}
\int_0^{\frac \pi 2} J_\mu (z\sin \theta) (\sin \theta)^{\mu+1} (\cos \theta)^{2\nu +1} \dd \theta &= 2^\nu \Gamma(\nu+1) z^{-\nu-1} J_{\mu+\nu+1}(z)
\end{align*} which holds provided~\(\RE \mu>-1\) and~\(\RE \nu>-1\), see~\cite[Eq.~10.22.19]{NIST:DLMF}. From this, we obtain \begin{align*}
I(t)&= 2^s \Gamma(s+1) \bigg ( \frac{\vert x \vert } t \bigg )^{s+1} J_{\frac n 2 +s}( \vert x \vert^{-1}t ).
\end{align*} Hence, \begin{align*}
(-\Delta)^s \psi (x) &= 2^s \gamma_{n,s}\Gamma(s+1) \vert x \vert^{-\frac n 2 -s} \int_0^\infty t^s J_{\frac n 2 -1} (t) J_{\frac n 2 +s}( \vert x \vert^{-1}t ) \dd t .
\end{align*} To compute this integral, we recall the identity \begin{align*}
&\int_0^\infty t^{-\lambda} J_\mu(at) J_\nu (bt) \dd t\\
=\;& \frac{a^\mu \Gamma \big( \frac 1 2 \nu + \frac 1 2 \mu - \frac 1 2 \lambda + \frac12\big )}{2^\lambda b^{\mu-\lambda+1}\Gamma \big ( \frac 12 \nu - \frac 1 2 \mu + \frac 12 \lambda + \frac 12 \big ) \Gamma(\mu+1)} {}_2F_1 \bigg ( \frac 12 \big ( \mu + \nu -  \lambda + 1\big )  , \frac12 \big( \mu - \nu -\lambda+1 \big) ; \, \mu+1 ; \, \frac{a^2}{b^2} \bigg )
\end{align*} which holds when~\(0<a<b\) and~\(\RE(\mu+\nu+1)>\RE \lambda >-1\), see~\cite[Eq.~10.22.56]{NIST:DLMF}. 
Using this identity with~$a:=1/|x|$, $b:=$, $\lambda:=-s$, $\mu:=(n+2s)/2$,
and~$\nu:=(n-2)/2$ (and recalling that~\(\vert x \vert >1\)), we obtain that \begin{align*}
(-\Delta)^s \psi (x) &=  \frac {2 \gamma_{n,s} \cdot 4^s \Gamma(s+1)  } {(n+2s)\Gamma  ( -s)}  \vert x \vert^{-n -2s}  {}_2F_1 \bigg ( \frac{n+2s}2  ,s+1; \, \frac{n+2s}2 +1 ; \, \frac1{\vert x \vert^2} \bigg ).
\end{align*} A simple calculation gives that \begin{align*}
 \frac {2 \gamma_{n,s} \cdot 4^s \Gamma(s+1)  } {(n+2s)\Gamma  ( -s)}  &= -\frac { s\Gamma(n/2)  } {\Gamma  ( \frac{n+2s}2 +1   )\Gamma  ( 1-s)} =-a_{n,s}
\end{align*} which completes the case~\(x\in \R^n \setminus \overline{B_1}\). 

Finally, let~\(x\in \partial B_1\). Then, by rotational symmetry, \begin{align*}
(-\Delta)^s \psi(x) &= -\gamma_{n,s} \int_{B_1} \frac{(1-\vert y \vert ^2)^s}{\vert x -y \vert^{n+2s} } \dd y =-\gamma_{n,s} \int_{B_1} \frac{(1-\vert y \vert ^2)^s}{\vert e_1  -y \vert^{n+2s} } \dd y .
\end{align*} Now, let $$
K:= \big\{ y \in \R^n_+ \text{ : } 1/2< y_1 < 1- \vert y' \vert \big\}$$
and notice that~$K\subset B_1$.

In~\(K\), we have that \begin{align*}
\frac{(1-\vert y \vert ^2)^s}{\vert e_1  -y \vert^{n+2s} } &= \frac{(1-y_1^2-\vert y' \vert ^2)^s}{\big ( (1-y_1)^2+\vert y' \vert^2 \big )^{\frac{n+2s}2} } 
\geqslant \frac{(1-y_1^2-(1-y_1)^2)^s}{2^{\frac{n+2s}2}(1-y_1)^{n+2s} } 
\\ &=\frac{\big(2y_1(1-y_1)\big)^s}{2^{\frac{n+2s}2}(1-y_1)^{n+2s} } 
\geqslant  \frac{1}{ 2^{\frac{n+2s}2}(1-y_1)^{n+s}}.
\end{align*} 
Hence,  \begin{align*}
\int_{B_1} \frac{(1-\vert y \vert ^2)^s}{\vert e_1  -y \vert^{n+2s} } \dd y &\geqslant \frac{1}{ 2^{n+2s}} \int_K \frac {\dd y} { (1-y_1)^{n+s}} .
\end{align*} Then the coarea formula gives that \begin{align*}
\int_K \frac {\dd y} { (1-y_1)^{n+s}} &= \int_{1/2}^1 \int_{B^{n-1}_{1-t}} \frac{\dd \mathcal H^{n-1} \dd t }{ (1-t)^{n+s}} \geqslant C \int_{1/2}^1\frac{\dd t}{ (1-t)^{1+s}} = +\infty 
\end{align*} which shows that~\((-\Delta)^s \psi(x)=-\infty\).
\end{proof}

\begin{lem} \thlabel{Fov7scJw}
Let~\(s\in (0,1)\), \(n\) be a positive integer, and 
$$\psi(x) := \gamma_{n,s} (1-\vert x \vert^2)^s_+ \qquad
{\mbox{ with }}\quad \gamma_{n,s}:= 
\frac{4^{-s} \Gamma(n/2)}{\Gamma( \frac{n+2s} 2  ) \Gamma(1+s)}.$$ 
Let~\(p:\R^n \to \R\) be a homogeneous harmonic polynomial of degree~\(\ell\geqslant0\), that is, \(p\) is a polynomial such that \(p(ax)=a^\ell p(x)\) for all~\(a>0\) and~\( x\in \R\) and~\(\Delta p=0\) in~\(\R^n\). 

Then, \begin{align*}
&(-\Delta)^s (p\psi)(x)\\
 =\;& \frac{\gamma_{n,s}}{\gamma_{n+2\ell,s}}  p(x)  \begin{cases}
1, &\text{if } x\in B_1, \\
{\displaystyle -a_{n+2\ell,s} \vert x \vert^{-n-2s-2\ell } {}_2F_1 \bigg ( \frac{n+2s} 2+\ell  , s+1 ; \frac{n+2s}2+1+\ell ; \vert x \vert^{-2} \bigg )   } , & \text{if } x\in \R^n \setminus \overline {B_1}, \\
-\infty, &\text{if } x\in \partial B_1 ,
\end{cases}
\end{align*} where $$a_{n,s}:=  \frac { s\Gamma(n/2)  } {\Gamma  ( \frac{n+2s}2 +1   )\Gamma  ( 1-s)} .$$
\end{lem}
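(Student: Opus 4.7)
My plan is to reduce the statement to Lemma~\thref{P7i5W6Bg} by a dimensional-lifting argument. The key structural fact is the classical Bochner--Hecke identity: if $p$ is a solid (that is, homogeneous and harmonic) polynomial of degree $\ell$ on $\R^n$ and $f:[0,\infty)\to\R$ is sufficiently regular, then
\begin{equation*}
(-\Delta)^s_n\bigl(p(x)\,f(|x|)\bigr) \;=\; p(x)\,\bigl((-\Delta)^s_{n+2\ell}\,f\bigr)(|x|),
\end{equation*}
where on the right-hand side $f$ is viewed as a radial function on $\R^{n+2\ell}$. This follows from the Bochner--Hecke formula for the Fourier transform of a solid-harmonic-times-radial function, which produces another solid-harmonic-times-radial function whose radial factor is essentially a Hankel transform of order $\frac{n}{2}-1+\ell=\frac{n+2\ell}{2}-1$, combined with the fact that the Fourier multiplier $|\xi|^{2s}$ preserves this product form.

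Given this identity, I would apply it with $f(\tau)=\gamma_{n,s}(1-\tau^2)_+^s$ and invoke Lemma~\thref{P7i5W6Bg} in dimension $n+2\ell$ to evaluate the resulting radial factor. Writing $\psi_m:=\gamma_{m,s}(1-|\cdot|^2)_+^s$, one has $\psi=(\gamma_{n,s}/\gamma_{n+2\ell,s})\,\psi_{n+2\ell}$, whence
\begin{equation*}
(-\Delta)^s_n(p\psi)(x)\;=\;\frac{\gamma_{n,s}}{\gamma_{n+2\ell,s}}\,p(x)\,\bigl((-\Delta)^s_{n+2\ell}\psi_{n+2\ell}\bigr)(|x|),
\end{equation*}
and Lemma~\thref{P7i5W6Bg} applied in dimension $n+2\ell$ yields exactly the three cases in the statement, once one observes that $(n+2\ell+2s)/2=(n+2s)/2+\ell$, so that the hypergeometric parameters and the constant $a_{n+2\ell,s}$ match as displayed. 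A more self-contained alternative is to redo the Bessel-integral computation of Lemma~\thref{P7i5W6Bg} \emph{ab initio}, using the Hecke-type extension of~\cite[Lemma 7.1]{MR3916700} to radial-times-solid-harmonic profiles: this replaces every occurrence of $J_{n/2-1}$ by $J_{n/2-1+\ell}$, after which the identities~\cite[Eqs.~10.22.19 and 10.22.56]{NIST:DLMF} apply verbatim with $n\leadsto n+2\ell$, producing the stated formula after the same elementary algebraic simplification.

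The main obstacle I anticipate is the rigorous justification of the Bochner--Hecke reduction (equivalently, of the Bessel-integral representation) for $|x|$ near $1$, where the relevant Hankel integrals converge only conditionally; this is handled in the same spirit as in Lemma~\thref{P7i5W6Bg} by first working on $\{|x|\neq 1\}$. The boundary case $x\in\partial B_1$ is then treated directly by a pointwise estimate: whenever $p(x_0)\neq 0$, the polynomial $p$ has constant sign on a conic neighborhood of $x_0$, so restricting the singular integral defining $(-\Delta)^s(p\psi)(x_0)$ to the subregion $K$ used in the proof of Lemma~\thref{P7i5W6Bg} produces the desired divergence of the appropriate sign, consistent with interpreting $p(x_0)\cdot(-\infty)$ as $-\mathrm{sign}(p(x_0))\cdot\infty$.
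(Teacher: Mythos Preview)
Your approach is essentially identical to the paper's: the paper invokes \cite[Proposition~3]{MR3640641}, which is precisely the Bochner--Hecke dimensional-lifting identity you describe, to obtain $(-\Delta)^s(p\psi)(x)=\frac{\gamma_{n,s}}{\gamma_{n+2\ell,s}}\,p(x)\,(-\Delta)^s\widetilde\psi(\widetilde x)$ with $\widetilde\psi(z)=\gamma_{n+2\ell,s}(1-|z|^2)_+^s$ on $\R^{n+2\ell}$, and then applies \thref{P7i5W6Bg} in dimension $n+2\ell$. Your discussion of the boundary case and of the convergence issues near $|x|=1$ is a bit more explicit than the paper's, which simply defers all three cases to the higher-dimensional lemma.
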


\begin{proof}
Let~\(\widetilde \psi :\R^{n+2\ell} \to \R\) be given by \begin{align*}
\widetilde \psi (z) &= \gamma_{n+2\ell,s} (1-\vert z \vert^2 )^s_+ \qquad \text{for all } z\in \R^{n+2\ell}. 
\end{align*} Since~\(\psi\) is a radial function, it follows from~\cite[Proposition~3]{MR3640641} that \begin{align*}
(-\Delta)^s (p\psi)(x) &= \frac{\gamma_{n,s}}{\gamma_{n+2\ell,s}}  p(x) (-\Delta)^s \widetilde \psi (\widetilde x)
\end{align*} where~\(\widetilde x\in \R^{n+2\ell}\) with~\(\vert \widetilde x \vert = \vert x\vert\). Then the result follows from~\thref{P7i5W6Bg} applied in~\(\R^{n+2\ell}\). 
\end{proof}

\begin{lem} \thlabel{TxksrRDI}
Let~\(s\in (0,1)\) and~\(n\) be a positive integer. For all~\(0<\tau <1 \), let \begin{align*}
K(\tau) &:= a_{n,s} \big ( 1- \tau \big )^{-s} \tau^{\frac{n+2s}2 }\\
\text{ and } \quad
F(\tau)&:= {}_2F_1 \bigg (1  , \frac n 2 ; \frac{n+2s}2+1; \tau \bigg )
\end{align*} where $$a_{n,s}:=  \frac { s\Gamma(n/2)  } {\Gamma  ( \frac{n+2s}2 +1   )\Gamma  ( 1-s)} .$$ 
Moreover, let \begin{align*}
f(\tau) &:= 1-K(\tau ) \big ( F(\tau) -1 \big )  \\
\text{ and }\quad
g(\tau )&:= K(\tau) \bigg ( \frac{n+2s}{2s}-F(\tau) \bigg ) -1.
\end{align*} Then~\(f(\tau )\leqslant 1\) and~\(g(\tau) \leqslant 0\), for all~\(0<\tau <1 \). 
\end{lem}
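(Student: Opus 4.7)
\medskip

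The plan is to handle the two inequalities separately, with very different strategies.

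For the bound $f(\tau)\le 1$: since $a_{n,s}>0$, $(1-\tau)^{-s}>0$, and $\tau^{(n+2s)/2}>0$ for $\tau\in(0,1)$, we have $K(\tau)>0$, so it suffices to show $F(\tau)\ge 1$. Expanding $F$ as a Gauss series,
\[
F(\tau) \;=\; \sum_{k=0}^\infty \frac{(1)_k\,(n/2)_k}{\big((n+2s)/2+1\big)_k\,k!}\,\tau^k,
\]
all Pochhammer symbols are positive (their arguments are positive), so every coefficient is $\ge 0$ and the constant term equals $1$. Hence $F(\tau)\ge 1$ for $\tau\in[0,1)$, giving $f(\tau)\le 1$.

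For the bound $g(\tau)\le 0$, the main tool will be Euler's integral representation
\[
F(\tau) \;=\; \frac{\Gamma\!\big(\frac{n+2s}{2}+1\big)}{\Gamma(n/2)\,\Gamma(s+1)}\int_0^1 t^{n/2-1}(1-t)^s(1-\tau t)^{-1}\,dt,
\]
valid since $c-b=s+1>0$. A direct check (using Gauss's summation theorem for $c-a-b=s>0$, with $\Gamma(s+1)=s\Gamma(s)$ and $\Gamma\!\big(\tfrac{n+2s}{2}+1\big)=\tfrac{n+2s}{2}\Gamma\!\big(\tfrac{n+2s}{2}\big)$) shows that $F(1)=(n+2s)/(2s)$, so the bracket in $g$ is simply $F(1)-F(\tau)$. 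Writing
\[
\frac{1}{1-t}-\frac{1}{1-\tau t}\;=\;\frac{(1-\tau)\,t}{(1-t)(1-\tau t)},
\]
one gets
\[
F(1)-F(\tau) \;=\; \frac{\Gamma\!\big(\frac{n+2s}{2}+1\big)(1-\tau)}{\Gamma(n/2)\,\Gamma(s+1)}\int_0^1 \frac{t^{n/2}(1-t)^{s-1}}{1-\tau t}\,dt.
\]
Next, the change of variables $y=(1-t)/(1-\tau t)$ (equivalently $t=(1-y)/(1-\tau y)$, with $1-\tau t=(1-\tau)/(1-\tau y)$ and $dt=-(1-\tau)(1-\tau y)^{-2}\,dy$) transforms this integral into
\[
\int_0^1\frac{t^{n/2}(1-t)^{s-1}}{1-\tau t}\,dt \;=\; (1-\tau)^{s-1}\int_0^1 \frac{y^{s-1}(1-y)^{n/2}}{(1-\tau y)^{n/2+s}}\,dy.
\]

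Multiplying by $K(\tau)=a_{n,s}(1-\tau)^{-s}\tau^{(n+2s)/2}$ and using the explicit form of $a_{n,s}$ together with Euler's reflection formula $\Gamma(s)\Gamma(1-s)=\pi/\sin(\pi s)$ (and $\Gamma(s+1)=s\Gamma(s)$), the constants collapse to $\sin(\pi s)/\pi$, giving the clean identity
\[
K(\tau)\big(F(1)-F(\tau)\big) \;=\; \frac{\sin(\pi s)}{\pi}\,\tau^{(n+2s)/2}\int_0^1\frac{y^{s-1}(1-y)^{n/2}}{(1-\tau y)^{n/2+s}}\,dy.
\]
A final substitution $z=\tau y$ rewrites the right-hand side as
\[
\frac{\sin(\pi s)}{\pi}\int_0^\tau \frac{z^{s-1}(\tau-z)^{n/2}}{(1-z)^{n/2+s}}\,dz.
\]
Now the elementary inequality $\tau-z\le 1-z$ for $0\le z\le\tau\le 1$ yields $(\tau-z)^{n/2}/(1-z)^{n/2}\le 1$, so the integrand is dominated by $z^{s-1}(1-z)^{-s}$, and enlarging the range to $[0,1]$ and using the Beta integral $B(s,1-s)=\pi/\sin(\pi s)$ produces the bound $1$. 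This gives $K(\tau)(F(1)-F(\tau))\le 1$, i.e.\ $g(\tau)\le 0$.

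The main obstacle is the second inequality: the quantity $g$ is the difference of two terms that both blow up individually at $\tau\to 1^-$ (since $K(\tau)\to +\infty$ and $F(1)-F(\tau)\to 0^+$), and a naive bound on the integral is too weak. The key point is the change of variables $y=(1-t)/(1-\tau t)$, which precisely cancels the singular $(1-\tau)^{-s}$ factor and reveals the sharp structure that allows the elementary comparison $\tau-z\le 1-z$ to close the argument cleanly, with equality at $\tau=1$ reflected in the sharp Beta integral identity.
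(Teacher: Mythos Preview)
Your proof is correct. The treatment of $f(\tau)\le 1$ matches the paper exactly.

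For $g(\tau)\le 0$, your route is genuinely different from the paper's. The paper writes $F(1)-F(\tau)=\int_\tau^1 F'(t)\,dt$, uses the derivative formula for ${}_2F_1$ together with an Euler transformation, and then splits off the value at $t=1$ so that the remaining integral has a sign by the monotonicity of the hypergeometric function; what is left is $\tau^{(n+2s)/2}-1\le 0$. Your argument instead starts from Euler's integral representation of $F$, computes $F(1)-F(\tau)$ as a single integral, and applies the substitution $y=(1-t)/(1-\tau t)$ to cancel the singular $(1-\tau)^{-s}$ factor exactly, arriving at the clean identity
\[
K(\tau)\big(F(1)-F(\tau)\big)=\frac{\sin(\pi s)}{\pi}\int_0^\tau \frac{z^{s-1}(\tau-z)^{n/2}}{(1-z)^{n/2+s}}\,dz,
\]
after which the pointwise bound $(\tau-z)^{n/2}\le (1-z)^{n/2}$ and the Beta integral $B(s,1-s)=\pi/\sin(\pi s)$ finish the job. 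Your approach is more self-contained (it avoids derivative, contiguous, and monotonicity identities for ${}_2F_1$), and the explicit integral identity you derive makes the sharpness at $\tau=1$ transparent. The paper's approach, by contrast, stays within the catalogue of standard hypergeometric transformations and may be easier to generalise to other parameter configurations. Both are short and complete.
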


\begin{proof}
The inequality for~\(f\) is trivial since~\(K\geqslant 0\) and, from the definition of the hypergeometric function, we have that~\( F(\tau) \geqslant 1 \). 

To show that~\(g\leqslant 0\), observe first of all that \begin{align*}
 F(1) = \frac{\Gamma \big (\frac{n+2s}2+1 \big ) \Gamma (s )  }{\Gamma \big (\frac{n+2s}2 \big ) \Gamma (s+1)} = \frac{n+2s}{2s}
\end{align*} by~\cite[Eq 15.4.20]{NIST:DLMF}. Hence, by the Fundamental Theorem of Calculus,\begin{align*}
 \frac{n+2s}{2s}-F(\tau) &= \int^1_\tau F'(t) \dd t \\
 &= \frac n {n+2s+2 } \int^1_\tau  {}_2F_1 \bigg (2  , \frac n 2+1 ; \frac{n+2s}2+2; t \bigg ) \dd t\\
 &= \frac n {n+2s+2 } \int^1_\tau  \big(1-t \big )^{s-1} {}_2F_1 \bigg (\frac{n+2s}2  , s+1 ; \frac{n+2s}2+2; t \bigg ) \dd t
\end{align*} using~\cite[Eq 15.5.1]{NIST:DLMF} and~\cite[Eq 15.8.1]{NIST:DLMF}. Now, using~\cite[Eq 15.4.20]{NIST:DLMF} again, we have that \begin{align*}
{}_2F_1 \bigg (\frac{n+2s}2  , s+1 ; \frac{n+2s}2+2; 1 \bigg )  &= \frac{\Gamma \big (\frac{n+2s}2+2 \big ) \Gamma (1-s) }{\Gamma (2) \Gamma \big ( \frac n2 +1 \big )   } = \frac{s(n+2s+2)}{n a_{n,s}} ,
\end{align*}so \begin{align*}
 \frac{n+2s}{2s}-F(\tau)&=\frac n {n+2s+2 } \int^1_\tau  \big(1-t \big )^{s-1}\bigg [  {}_2F_1 \bigg (\frac{n+2s}2  , s+1 ; \frac{n+2s}2+2; t \bigg )-\frac{s(n+2s+2)}{n a_{n,s}}  \bigg ]  \dd t \\
 &\qquad + \frac 1 {a_{n,s}}(1-\tau)^s . 
\end{align*} Hence, \begin{align*}
&g(\tau)\\
 =\;& \big ( 1- \tau \big )^{-s} \tau^{\frac{n+2s}2 } \frac {n\cdot a_{n,s} } {n+2s+2 } \int^1_\tau  \big(1-t \big )^{s-1}\bigg [  {}_2F_1 \bigg (\frac{n+2s}2  , s+1 ; \frac{n+2s}2+2; t \bigg )-\frac{s(n+2s+2)}{n a_{n,s}}  \bigg ]  \dd t \\
 &\qquad +  \tau^{\frac{n+2s}2 } -1.
\end{align*} Finally, using the monotonicity properties of hypergeometric functions, \begin{align*}
{}_2F_1 \bigg (\frac{n+2s}2  , s+1 ; \frac{n+2s}2+2; t \bigg ) \leqslant {}_2F_1 \bigg (\frac{n+2s}2  , s+1 ; \frac{n+2s}2+2; 1 \bigg )=\frac{s(n+2s+2)}{n a_{n,s}}  
\end{align*} which implies that~\(g(\tau ) \leqslant 0\). 
\end{proof}

We now give the proof of \thref{prop:newantisymmetricbarrier}.

\begin{proof}[Proof of \thref{prop:newantisymmetricbarrier}]
Let~\(\psi (x) := \gamma_{n,s}(1-\vert x \vert^2 )^s_+\) and~\(\overline \psi (x) := \gamma_{n,s} x_1 (1-\vert x \vert^2)^s_+\). Since~\(\psi_{B_\rho(a)}(x) = \rho^{2s} \psi( (x-a)/\rho) \), we see that \begin{align*}
x_1 \psi_{B_\rho(a)}(x) &= \rho^{2s+1} \bigg ( \frac{x_1 -a_1}  \rho \bigg ) \psi \bigg ( \frac{x-a} \rho\bigg ) +\rho^{2s} a_1\psi \bigg ( \frac{x-a} \rho\bigg )  \\
&= \rho^{2s+1} \overline \psi \bigg ( \frac{x-a} \rho\bigg ) +\rho^{2s} a_1\psi \bigg ( \frac{x-a} \rho\bigg )  \\ \text{ and }\quad
x_1 \psi_{B_\rho(a_\ast)}(x) &=\rho^{2s+1} \bigg ( \frac{x_1+a_1}\rho \bigg ) \psi \bigg ( \frac{x-a_\ast} \rho\bigg ) -\rho^{2s} a_1\psi \bigg ( \frac{x-a_\ast} \rho\bigg ) \\
&= \rho^{2s+1} \overline  \psi \bigg ( \frac{x-a_\ast} \rho\bigg ) -\rho^{2s} a_1\psi \bigg ( \frac{x-a_\ast} \rho\bigg ).
\end{align*} Hence, for~\(x\in B_\rho(a)\), it follows immediately from Lemmata~\ref{P7i5W6Bg} and~\ref{Fov7scJw} that \begin{equation}\label{VgjyomNF}\begin{split}
(-\Delta)^s (x_1 \psi_{B_\rho(a)}) (x) &=  \rho (-\Delta)^s \overline \psi \bigg ( \frac{x-a} \rho\bigg ) +a_1(-\Delta)^s \psi \bigg ( \frac{x-a} \rho\bigg )  \\
&= \frac{\gamma_{n,s}} {\gamma_{n+2,s}}  ( x_1-a_1) +a_1  \\
&= \frac{n+2s}n x_1 -\frac{2s}n a_1   
\end{split}\end{equation} using also that \begin{align}
\gamma_{n+2,s}=\frac{4^{-s} \Gamma(n/2+1)}{\Gamma( \frac{n+2s} 2+1  ) \Gamma(1+s)} = \frac n {n+2s} \gamma_{n,s} . \label{YVHFpqS4}
\end{align}  Similarly, if~\(x\in B_\rho(a_\ast)\), then  \begin{align*}
(-\Delta)^s ( x_1 \psi_{B_\rho(a_\ast)})(x)  &=  \rho (-\Delta)^s \overline \psi \bigg ( \frac{x-a_\ast} \rho\bigg ) -a_1(-\Delta)^s \psi \bigg ( \frac{x-a_\ast} \rho\bigg ) \\
&= \frac{\gamma_{n,s}} {\gamma_{n+2,s}} (x_1+a_1) -a_1 \\
&= \frac{n+2s} n x_1 +\frac{2s} n a_1 .
\end{align*} This gives that \begin{align*}
(-\Delta)^s \varphi(x) &= \frac{2(n+2s)} n x_1 \qquad \text{in } B_\rho(a) \cap B_\rho(a_\ast)
\end{align*}which, in particular, trivially implies~\eqref{eTrwiP0v}
for points in~$B_\rho(a) \cap B_\rho(a_\ast)$.

Now, towards the proof of~\eqref{eTrwiP0v}, we will give a formula for~\((-\Delta)^s \varphi(x)\) in the case~\(x\in B^+_\rho(a) \setminus \overline{ B_\rho(a_\ast)}\). Equation~\eqref{VgjyomNF} is still valid in this region, so we can focus on computing~\((-\Delta)^s ( x_1 \psi_{B_\rho(a_\ast)})\). Let~\(y=y(x) := \frac{x-a_\ast}{\rho} \) and note that, since~\(x \not \in \overline{ B_\rho(a_\ast)}\), we have that~\(\vert y \vert >1\) which is important for the upcoming formulas to be well-defined. 

By Lemmata~\ref{P7i5W6Bg} and~\ref{Fov7scJw}, we have that \begin{align*}
(-\Delta)^s ( x_1 \psi_{B_\rho(a_\ast)}) &= -\rho \frac{\gamma_{n,s}}{\gamma_{n+2,s}} y_1 a_{n+2,s} \vert y \vert^{-n-2s-2 } {}_2F_1 \bigg ( \frac{n+2s} 2+1  , s+1 ; \frac{n+2s}2+2; \vert y \vert^{-2} \bigg ) \\
&\qquad +a_1a_{n,s} \vert y \vert^{-n-2s } {}_2F_1 \bigg ( \frac{n+2s} 2  , s+1 ; \frac{n+2s}2+1 ; \vert y \vert^{-2} \bigg ) .
\end{align*} Next, observe that \begin{align*}
a_{n+2,s}=  \frac { s\Gamma(n/2+1)  } {\Gamma  ( \frac{n+2s}2 +2   )\Gamma  ( 1-s)} = \frac n {n+2s+2}a_{n,s}, 
\end{align*} so, using also~\eqref{YVHFpqS4}, we have that \begin{align*}
(-\Delta)^s ( x_1 \psi_{B_\rho(a_\ast)}) &= -a_{n,s}\vert y \vert^{-n-2s-2 }  \bigg [ \rho y_1 \bigg ( \frac{n+2s}{n+2s+2} \bigg )   {}_2F_1 \bigg ( \frac{n+2s} 2+1  , s+1 ; \frac{n+2s}2+2; \vert y \vert^{-2} \bigg ) \\
&\qquad -a_1 \vert y \vert^2 {}_2F_1 \bigg ( \frac{n+2s} 2  , s+1 ; \frac{n+2s}2+1 ; \vert y \vert^{-2} \bigg )  \bigg ] .
\end{align*} Then, via the transformation formula, \begin{align*}
{}_2F_1 ( a  ,b ; c; \tau) &= (1-\tau)^{c-a-b} {}_2F_1 ( c-a  ,c-b  ; c; \tau ) 
\end{align*} which, for~\(\tau \in \R\), holds provided that~\(0<\tau <1\), 
see~\cite[Eq~15.8.1]{NIST:DLMF}, we obtain  \begin{align*}
&(-\Delta)^s ( x_1 \psi_{B_\rho(a_\ast)}) \\
=& -a_{n,s} \big ( 1- \vert y \vert^{-2} \big )^{-s} \vert y \vert^{-n-2s-2 }  \bigg [ \rho y_1 \bigg ( \frac{n+2s}{n+2s+2} \bigg )   {}_2F_1 \bigg (1  , \frac n 2 +1 ; \frac{n+2s}2+2; \vert y \vert^{-2} \bigg ) \\
&\qquad -a_1 \vert y \vert^2 {}_2F_1 \bigg ( 1  , \frac n 2  ; \frac{n+2s}2+1 ; \vert y \vert^{-2} \bigg )  \bigg ]   \\
=& -\vert y \vert^{-2 }K \big (  \vert y \vert^{-2 } \big )   \bigg [ \rho y_1 \bigg ( \frac{n+2s}{n+2s+2} \bigg )   {}_2F_1 \bigg (1  , \frac n 2 +1 ; \frac{n+2s}2+2; \vert y \vert^{-2} \bigg )-a_1 \vert y \vert^2 F \big ( \vert y \vert^{-2} \big )   \bigg ] 
\end{align*} using the notation introduced in~\thref{TxksrRDI}. 

Moreover, we apply the following identity between contiguous functions: \begin{align*}
\frac {b\tau} c  {}_2F_1 ( a  ,b+1 ; c+1; \tau) &= {}_2F_1 ( a  ,b ; c; \tau) - {}_2F_1 ( a-1  ,b ; c; \tau),
\end{align*} 
(used here with~$a:=1$, $b:=n/2$ and~$c:=(n+2s+2)/2$)
see~\cite[Eq~15.5.16\textunderscore5]{NIST:DLMF}, and that
\({}_2F_1 ( 0  ,b ; c; \tau) =1 \), to obtain \begin{align*}
 {}_2F_1 \bigg (1  , \frac n 2 +1 ; \frac{n+2s}2+2; \vert y \vert^{-2} \bigg ) &= \frac{n+2s+2}n \vert y \vert^2 \bigg ( {}_2F_1 \bigg (1  , \frac n 2 ; \frac{n+2s}2+1; \vert y \vert^{-2} \bigg ) -1 \bigg ) \\
 &= \frac{n+2s+2}n \vert y \vert^2 \big (  F \big ( \vert y \vert^{-2} \big ) -1 \big ) .
\end{align*} Hence, it follows that \begin{align*}&
(-\Delta)^s ( x_1 \psi_{B_\rho(a_\ast)}) \\=& -K \big (  \vert y \vert^{-2 } \big )  \bigg [ \rho y_1 \bigg ( \frac{n+2s}n \bigg ) \big ( F \big ( \vert y \vert^{-2} \big ) -1  \big )  -a_1 F \big ( \vert y \vert^{-2} \big )  \bigg ] \\
=& -\bigg ( \frac{n+2s}n \bigg )  K \big (  \vert y \vert^{-2 } \big )  \big ( F \big ( \vert y \vert^{-2} \big ) -1  \big ) x_1   + \frac{2s } nK  \big (  \vert y \vert^{-2 } \big )  \bigg (    \frac{n+2s}{2s} -  F \big ( \vert y \vert^{-2} \big )  \bigg )a_1
\end{align*} using that~\(\rho y_1 =x_1+a_1\). Thus, in~\(x\in B^+_\rho(a) \setminus \overline{ B_\rho(a_\ast)}\), \begin{align*}
(-\Delta)^s \varphi(x) &= \bigg ( \frac{n+2s}n \bigg ) \bigg [ 1 - K \big (  \vert y \vert^{-2 } \big )  \big ( F \big ( \vert y \vert^{-2} \big ) -1  \big ) \bigg ]  x_1  \\
&\qquad \qquad  + \frac{2s } n \bigg [ K  \big (  \vert y \vert^{-2 } \big )  \bigg (    \frac{n+2s}{2s} -  F \big ( \vert y \vert^{-2} \big )  \bigg ) -1  \bigg] a_1.
\end{align*} Finally, by Lemma~\ref{TxksrRDI}, we conclude that \begin{align*}
(-\Delta)^s \varphi(x) &\leqslant \frac{n+2s}n  x_1 \qquad \text{in }  B^+_\rho(a) \setminus \overline{ B_\rho(a_\ast)},
\end{align*}
which completes the proof of the desired result in~\eqref{eTrwiP0v}.
\end{proof}

\subsection{A quantitative nonlocal maximum principle and quantitative nonlocal versions of Hopf lemma and Serrin corner point lemma}\label{subsec:Hopf e Serrin Corner}
The following proposition provides a new quantitative nonlocal maximum principle, which enhances~\cite[Proposition~3.6]{DPTVParallelStability} from interior to boundary estimates. Remarkably, this allows a unified treatment for the quantitative analysis of the method of the moving planes, leading to quantitative versions of both the nonlocal Hopf-type lemma and the nonlocal 
Serrin corner point lemma established in~\cite[Proposition~3.3 and Lemma~4.4]{MR3395749}: see Theorems~\ref{thm:quantitative Hopf} and~\ref{thm:quantitative corner lemma} below.

\begin{prop}\thlabel{Quantitative Maximum Principle}
Let~\(H\subset \R^n\) be a halfspace and~\(U \) be an open subset of~$H$. Let~$U$, $a\in \ol{H} $ and~$\rho>0$ be such that~\(B_\rho(a)\cap H \subset U\), and set~$a_*:=Q_{\pa H}(a)$. Also, let~\(c\in L^\infty(U)\) be such that
\begin{equation}\label{eq:condition with eigenvalue}
\|c^+\|_{L^\infty(U)} < \lambda_1(B_\rho(a)\cap H).
\end{equation} 

Let~$K\subset H$ be a non-empty open set that is disjoint from~$B_\rho(a)$ and let~$0 \le \sS <\infty$ be such that
\begin{equation}\label{eq:def Sup vecchio M^-1}
\sup_{\genfrac{}{}{0pt}{2}{x\in K}{y\in B_\rho(a)\cap H}}
\vert Q_{\partial H}(x)-y\vert \leq \sS .
\end{equation}

If~$v$ is antisymmetric with respect to~$\pa H$ and satisfies
\begin{equation*}
\begin{cases}
(-\Delta)^s v +c v \geq 0 \quad & \text{ in } U  ,\\
v \geq 0 \quad & \text{ in } H ,
\end{cases}
\end{equation*}
in the weak sense, then
we have that
\begin{equation}\label{eq:quantitative maximum principle}
v(x) \geq C \| \delta_{\partial H} v\|_{L^1(K)} \, \de_{\pa H}(x) \, \left( \rho^2 - |x-a|^2 \right)_+^s 
\end{equation}
\begin{equation*}
\begin{cases}
(i) \ \ \text{ for a.e. } x \in H \cap  B_{\rho}(a) , & \quad \text{if } \de_{\pa H}(a)=0 ,
\\
(ii) \ \text{ for a.e. } x \in H \cap B_{\rho}(a) , & \quad \text{if } 0 < \de_{\pa H}(a) \le \rho/2 ,
\\
(iii) \text{ for a.e. } x \in B_{\rho}(a) , & \quad \text{if } \de_{\pa H}(a) > \rho ,
\end{cases}
\end{equation*}
with
\begin{equation*}
C := C(n,s) \frac{\rho^{2s}}{\sS^{n+2s+2}} \Big(1+\rho^{2s}\|c^+\|_{L^\infty(U)} \Big)^{-1}.
\end{equation*}
\end{prop}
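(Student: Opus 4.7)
The plan is to set up a comparison with the antisymmetric barrier $\varphi$ provided by \thref{prop:newantisymmetricbarrier}. After WLOG reducing via a rigid motion to $H=\R^n_+$, so that $\de_{\pa H}(x)=x_1$ and $Q_{\pa H}$ is the reflection through $\{x_1=0\}$, this $\varphi$ is antisymmetric w.r.t.\ $\pa H$, nonnegative in $H$, satisfies the pointwise upper bound $\varphi\leq 2\gamma_{n,s}\rho^{2s}x_1$ (from $\psi_{B_\rho}\leq \gamma_{n,s}\rho^{2s}$) and the pointwise lower bound $\varphi(x)\geq \gamma_{n,s}\,x_1(\rho^2-|x-a|^2)_+^s$ on $B_\rho^+(a)$ (discarding the $\psi_{B_\rho(a_*)}\geq 0$ contribution), and, crucially, $(-\Delta)^s\varphi\leq \tfrac{n+2s}{n}x_1$ on $B_\rho^+(a)\setminus\pa B_\rho(a_*)$ by \thref{prop:newantisymmetricbarrier}. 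Writing $M:=\|\de_{\pa H}v\|_{L^1(K)}$, the goal reduces to establishing the comparison $v\geq \eta\varphi$ in $B_\rho(a)\cap H$ for a suitable $\eta$ of the form in the statement; \eqref{eq:quantitative maximum principle} then follows from the lower bound on $\varphi$.

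The crucial quantitative input is a positive source in the supersolution inequality, extracted from the mass of $v$ on $K$. Rewriting the fractional Laplacian of an antisymmetric function via the antisymmetric kernel $K_H(x,y):=|x-y|^{-(n+2s)}-|x-Q_{\pa H}(y)|^{-(n+2s)}$ (which is nonnegative on $H\times H$) and using $|x-Q_{\pa H}(y)|^2-|x-y|^2 = 4x_1 y_1$ together with the mean-value-type inequality $\alpha^{-p}-\beta^{-p}\geq p\,\beta^{-p-1}(\beta-\alpha)$ for $0<\alpha\leq\beta$, one gets
\[
K_H(x,y)\geq \frac{2(n+2s)\,x_1 y_1}{|x-Q_{\pa H}(y)|^{n+2s+2}}\qquad\text{for all }x,y\in H.
\]
For $x\in B_\rho(a)\cap H$ and $y\in K$, hypothesis \eqref{eq:def Sup vecchio M^-1} gives $|x-Q_{\pa H}(y)|\leq \sS$, so $K_H(x,y)\geq c(n,s)\,x_1 y_1/\sS^{n+2s+2}$. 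Integrating $c_{n,s}\,v(y)K_H(x,y)$ against $y\in K$ and combining with the weak supersolution inequality then yields the strengthened source bound $(-\Delta)^s v+cv\geq c'(n,s)\,x_1 M/\sS^{n+2s+2}$ in $B_\rho(a)\cap H$.

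Combining this source with the barrier upper bound and \thref{prop:newantisymmetricbarrier}, the antisymmetric function $w:=v-\eta\varphi$ satisfies, weakly in $B_\rho(a)\cap H$,
\[
(-\Delta)^s w+cw\geq \Big(\tfrac{c'(n,s)\,M}{\sS^{n+2s+2}} -\eta\Big[\tfrac{n+2s}{n}+2\gamma_{n,s}\rho^{2s}\|c^+\|_{L^\infty(U)}\Big]\Big)x_1,
\]
which is nonnegative for an $\eta$ of the form asserted by the statement (with $C(n,s)$ small enough). Since $w\geq 0$ outside $\supp\varphi$ (as $v\geq 0$ in $H$ and $\varphi$ vanishes there) and $w$ is antisymmetric, the hypothesis $\|c^+\|_{L^\infty(U)}<\lambda_1(B_\rho(a)\cap H)$ from \eqref{eq:condition with eigenvalue} allows one to invoke the antisymmetric weak maximum principle and conclude $w\geq 0$ in $B_\rho(a)\cap H$; the lower bound on $\varphi$ then delivers \eqref{eq:quantitative maximum principle}. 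The three regimes (i)--(iii) differ only in the geometry of $\supp\varphi\cap H$ (in (iii) the reflected ball $B_\rho(a_*)$ is disjoint from $H$, while in (i)--(ii) it partly overlaps $H$) and are handled uniformly by this same argument. The chief technical obstacle is to upgrade the pointwise barrier inequality of \thref{prop:newantisymmetricbarrier} --- which fails on the negligible set $\pa B_\rho(a_*)$ --- to a valid weak inequality on all of $B_\rho(a)\cap H$, and to carefully extract the $K$-mass source from the antisymmetric weak formulation of the bilinear form $\mathcal E$, before assembling the constants in the final comparison.
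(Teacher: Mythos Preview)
Your source-extraction step contains the main gap. From the hypotheses you only have $(-\Delta)^s v+cv\geq 0$ weakly in $U$; this inequality already incorporates the nonlocal contribution of $v$ from $K$, and there is no mechanism to upgrade it to $(-\Delta)^s v+cv\geq c'\,x_1 M/\sS^{n+2s+2}$ in $B_\rho(a)\cap H$. (Nothing in the assumptions excludes $(-\Delta)^s v+cv\equiv 0$ in $U$ while $\|\de_{\pa H}v\|_{L^1(K)}$ is large.) The paper does not attempt this. Instead it builds the $K$-mass into the comparison function: setting $w:=\tau\varphi+(\chf_K+\chf_{K_*})v$, the term
\[
\mathcal E\big((\chf_K+\chf_{K_*})v,\xi\big)=-c_{n,s}\int_{\supp\xi}\xi(x)\int_K v(y)\Big(\tfrac{1}{|x-y|^{n+2s}}-\tfrac{1}{|x_*-y|^{n+2s}}\Big)\,dy\,dx
\]
produces a \emph{negative} contribution to $\mathcal E(w,\xi)$ that competes with the positive $\tau x_1$ term coming from $\varphi$, and this is what permits $\tau\sim M/\sS^{n+2s+2}$. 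Equivalently, one could work with $\widetilde v:=v-(\chf_K+\chf_{K_*})v$, which does satisfy the strengthened inequality; but then it is $\widetilde v$, not $v$, that is being compared to $\eta\varphi$, and the exterior sign check must be redone for $\widetilde v$.

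Second, the three cases are \emph{not} handled uniformly in the paper, and your proposed uniform route hides a further difficulty. In case~(ii) the paper applies the antisymmetric comparison principle on $B^+\setminus\overline{B_*}$, not on $B^+$: the barrier inequality of \thref{prop:newantisymmetricbarrier} is only used there, while on $B^+\cap B_*$ the explicit computation in that proof gives $(-\Delta)^s\varphi=\tfrac{2(n+2s)}{n}x_1$, and one would additionally need to justify the weak inequality for test functions whose support crosses $\partial B_*$. Restricting the domain to $B^+\setminus\overline{B_*}$ forces an extra step, namely checking $w\leq v$ on $B_*^+$ before invoking the comparison principle; the paper does this by first proving case~(i) on the auxiliary ball $\widetilde B=B_{\sqrt{\rho^2-a_1^2}}\big(\tfrac{a+a_*}{2}\big)$ and then using the elementary inequality $\psi_{\widetilde B}\geq\tfrac12(\psi_B+\psi_{B_*})$ on $B_*^+$. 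Cases~(i) and~(iii) are genuinely simpler because there $B^+\setminus\overline{B_*}=B^+$ and no bootstrap is needed.
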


\begin{rem}\label{rem:remark on Faber}
{\rm
Notice that~\eqref{eq:condition with eigenvalue} is always satisfied provided that~$\rho$ is small enough. More precisely, the property in~\eqref{eq:Faber-Krahn} and the fact that~$|B_\rho (a) \cap H| \le |B_1|  \rho^n$ give that~\eqref{eq:condition with eigenvalue} is satisfied provided that
\begin{equation*}
\rho \le \frac{ \ka_{n,s}^{\frac{1}{2s}} }{|B_1|^{\frac{1}{n}} \|c^+\|_{L^\infty(U)}^{\frac{1}{2s}}} .
\end{equation*}
}
\end{rem}

\begin{rem}
{\rm
The three cases
in the statement of \thref{Quantitative Maximum Principle} may be gathered together in the following unified statement: {\it \eqref{eq:quantitative maximum principle} holds for a.e.~$x \in H \cap  B_{\rho}(a)$ provided that either~$\de_{\pa H}(a) \in \left[ 0, \rho/2 \right]$ or~$\de_{\pa H}(a) > \rho$.
}

Nevertheless, we prefer to keep the statement of \thref{Quantitative Maximum Principle} with the three cases to clarify their roles in their forthcoming applications.
}
\end{rem}
\begin{proof}[Proof of \thref{Quantitative Maximum Principle}]
Without loss of generality, we may assume that
$$
H = \R^n_+ = \big\{ x=(x_1,\dots,x_n)\in\R^n \, \text{ : } \, x_1>0 \big\} ,
$$
and hence use the notations~$x_*:= Q_{\pa H }(x) = Q_{\pa \R^n_+ }(x)=x-2x_1e_1$, for all~$x\in\R^n$, and~$A^+ = A \cap \R^n_+$, for all~$A\subset \R^n$.
 \smallskip
 
We first consider case~$(ii)$ (i.e., $0<\de_{\pa H}(a)=a_1\le \rho/2$), which is the most complicated, and take for granted the result in case~$(i)$, whose (simpler) proof is postponed below. Set~$B:=B_\rho(a)$ and~$B_*:=B_\rho( a_* )$.
Let~$\tau \geq 0$ be a constant to be chosen later and set
$$
w := \tau \varphi +  (\chf_K+\chf_{K_*}) v ,
$$
where~$K_*:=Q_{\pa \R^n_+}(K)$ and~$\varphi$ is as in~\eqref{eq:def new barrier}. By \thref{prop:newantisymmetricbarrier}, for any~$\xi \in \mathcal H^s_0(B^+ \setminus \ol{B_*} )$ with~$\xi \geqslant 0$, we have that
\begin{equation*}
\begin{split}&
\mathcal E (w, \xi )\\
=\;& \tau \mathcal E (\varphi, \xi )+ \mathcal E (\chf_K v , \xi )+ \mathcal E (\chf_{K_*} v , \xi ) 
\\
\leq\;& \frac{n+2s}{n} \,  \tau \int_{ \supp(\xi) } x_1 \xi(x) \dd x - c_{n,s} \left[ 
\iint_{ \supp(\xi)\times K} \frac{\xi(x) v(y)}{\vert x-y\vert^{n+2s}} \dd x  \dd y+
\iint_{ \supp(\xi)\times K_*} \frac{\xi(x) v(y)}{\vert x-y\vert^{n+2s}} \dd x  \dd y\right]
\\
=\;& \int_{\supp(\xi) } \bigg [ \frac{n+2s}{n} \, \tau x_1 - c_{n,s}  \int_{K}  \bigg ( \frac 1{\vert x-y\vert^{n+2s}} - \frac 1 {\vert x_* - y \vert^{n+2s}} \bigg ) v(y) \dd y \bigg ] \xi(x) \dd x ,
\end{split}
\end{equation*}
where~$\supp(\xi) $ denotes the support of~$\xi$.

Recalling~\eqref{eq:def Sup vecchio M^-1} and noting that, for all~\(x\in B^+\) and~\(y\in K\),
\begin{equation*}
\begin{split}
\frac 1{\vert x-y\vert^{n+2s}} - \frac 1 {\vert x_*- y \vert^{n+2s}}  
& = \frac{n+2s}2 \int_{\vert x- y \vert^2}^{\vert x_*- y \vert^2} t^{-\frac{n+2s+2}2} \dd t
\\
& \geq \frac{n+2s}2  \;\frac{ \vert x_* - y \vert^2 - \vert x- y \vert^2 }{ \vert x_*- y \vert^{n+2s+2}}
\\
&= 2(n+2s)  \frac{x_1y_1}{\vert x_* -y \vert^{n+2s+2} } 
\\
&\geq \frac{2(n+2s)}{\sS^{n+2s+2}} x_1y_1 ,
\end{split}
\end{equation*}
we obtain that
\begin{eqnarray*}
\mathcal E(w,\xi) &\leq& \left(
\frac{n+2s}n\,\tau -\frac{2c_{n,s}(n+2s)}{\sS^{n+2s+2}}\,\|y_1v\|_{L^1(K)} \right)  \int_{\supp(\xi) }   x_1 \xi(x) \dd x 
\\&=&
C \left( \tau - \widetilde C \,  \frac{ \| y_1 v \|_{L^1(K)}}{\sS^{n+2s+2}} \right)  \int_{\supp(\xi) }   x_1 \xi(x) \dd x ,
\end{eqnarray*}
where~\(C\) and~\(\widetilde C\) depend only on~\(n\) and~\(s\).

Hence, using that~$w = \tau \varphi = \tau x_1 \psi_B \leq \ga_{n,s} \rho^{2s} \tau x_1$ in~$B^+ \setminus \ol{B_*} \supset \supp(\xi) $, we have that
\begin{equation*}
\mathcal E(w,\xi) + \int_{\supp(\xi) } c(x) w(x) \xi(x) \dd x  
\leq
C  \left[ \tau \left( 1  + \rho^{2s}\|c^+\|_{L^\infty(U)} \right)  - \widetilde C \, \frac{  \| y_1 v \|_{L^1(K)} }{\sS^{n+2s+2}} \right] \int_{\supp(\xi)}   x_1 \xi(x) \dd x ,
\end{equation*}
where~$C$ and~$\widetilde C$ may have changed from the previous formula but still depend only on~$n$ and~$s$. 

We thus get that
\begin{equation*}
(-\Delta)^s w + cw \leq 0 \quad \text{ in } B^+ \setminus \ol{B_*},\end{equation*}
provided that \begin{equation*}
\tau \le \frac{ \widetilde C\, \| y_1 v \|_{L^1(K)}}{2\sS^{n+2s+2}} \left( 1  + \rho^{2s}\|c^+\|_{L^\infty(U)} \right)^{-1}
. 
\end{equation*}

We now claim that
\begin{equation}\label{deiwty0987654}
w\le v \quad {\mbox{ in }}\R^n_+\setminus\big( B^+ \setminus \ol{B_*}\big).
\end{equation}
To this end, we notice that, in~$\R^n_+ \setminus B^+$, we have 
that~$w = \chf_K v \le  v$. Hence, to complete the proof of~\eqref{deiwty0987654}
it remains to check that
\begin{equation}\label{deiwty098765400}
w \le v\quad {\mbox{ in }}B_*^+,\end{equation}
provided that~$\tau$ is small enough. In order to prove this, we set~$\widetilde{B}:=B_{\sqrt{\rho^2 - a_1^2}}\left( \frac{a+a_*}{2} \right)$ and notice that~$B \cap B_* \subset \widetilde{B} \subset B \cup B_*$.
Thus, an application of item~$(i)$ in~$\widetilde{B}$ gives that 
\begin{equation*}
v \ge \widehat{C} \, \frac{ (\rho^{2} - a_1^2)^s}{\sS^{n+2s+2}} \Big(1+(\rho^{2}-a_1^2)^s \|c^+\|_{L^\infty(U)} \Big)^{-1} \,\| y_1 v \|_{L^1(K)}\, x_1 \psi_{\widetilde{B}}(x)  \quad \text{ in } \widetilde{B}^+ ,
\end{equation*}
where~$\widehat{C}$ is a constant only depending on~$n$ and~$s$.
Hence, since~$ 0 < a_1 \le \rho/2$,
\begin{equation}\label{diweoghuyoiuypoiu98765}
v \ge \left( \frac{3}{4} \right)^s  \frac{\widehat{C}\, \rho^{2s}}{\sS^{n+2s+2}} \Big( 1 + \rho^{2s} \|c^+\|_{L^\infty(U)} \Big)^{-1} \,\| y_1 v \|_{L^1(K)}\, x_1 \psi_{\widetilde{B}}(x)  \quad \text{ in } \widetilde{B}^+ ,
\end{equation}
up to renaming~$\widehat{C}$.

Moreover, we claim that
\begin{equation}\label{topoeugtf}
\psi_{\widetilde{B}} \ge \frac{1}{2} \left( \psi_{B} + \psi_{B_*} \right) \quad \text{ in } B_*^+ .
\end{equation}
To check this, we observe that, for all~$t\in\left(0,\frac12\right)$,
\begin{equation}\label{pluto}
2(1-t)^s-(1-2t)^s \ge 1.\end{equation}
Also, if~$x\in B_*^+$, 
we have that~$\rho^2>|x-a_*|^2=|x-a|^2+4x_1a_1$. 
Hence, setting~$t:=\frac{2a_1x_1}{\rho^2-|x-a|^2}$, we see that~$t\in\left(0,\frac12\right)$ and thus we obtain from~\eqref{pluto} that
\begin{eqnarray*}
1\le 2\left(1-\frac{2a_1x_1}{\rho^2-|x-a|^2}\right)^s-\left(1-\frac{4a_1x_1}{\rho^2-|x-a|^2}\right)^s, 
\end{eqnarray*} 
that is
\begin{equation}\label{oewyjiythjvdnsjfe0w9876PPPP}\begin{split}
\big(\rho^2-|x-a|^2\big)^s\le\;& 2\Big( \rho^2-|x-a|^2 
- 2a_1x_1\Big)^s-\Big( \rho^2-|x-a|^2-4a_1x_1\Big)^s \\
=\;&2\left( \rho^2-a_1^2-\left| x-\frac{a+a_*}2\right|^2
\right)^s-\Big( \rho^2-|x-a_*|^2\Big)^s.
\end{split}\end{equation} 

Since~$x\in B_*^+$, we have that~$x\in B\cap B_*^+\subset \widetilde{B}$,
and therefore~\eqref{oewyjiythjvdnsjfe0w9876PPPP} gives that~$ \psi_{B}(x)\le 2\psi_{\widetilde{B}}(x)- \psi_{B_*}(x) $,
which is the desired result in~\eqref{topoeugtf}.

Putting together~\eqref{diweoghuyoiuypoiu98765} and~\eqref{topoeugtf}, 
we conclude that
\begin{equation*}
v \ge \left( \frac{3}{4} \right)^s \frac{ \widehat{C} \, \rho^{2s}}{2\sS^{n+2s+2}} \Big( 1 + \rho^{2s} \|c^+\|_{L^\infty(U)} \Big)^{-1} \,\| y_1 v \|_{L^1(K)}\,\varphi  \quad \text{ in } {B}_*^+ .
\end{equation*}
As a consequence, if
$$  \tau \le \left( \frac{3}{4} \right)^s \frac{\widehat{C}\,\rho^{2s}\,\| y_1 v \|_{L^1(K)}}{2\sS^{n+2s+2}} \Big(1+\rho^{2s}\|c^+\|_{L^\infty(U)} \Big)^{-1}, $$
we obtain that~$
w= \tau \varphi \le v$ in~$ B_*^+$.
This proves~\eqref{deiwty098765400}, and so~\eqref{deiwty0987654}
is established.

Gathering all these pieces of information, we conclude that, setting
\begin{equation*}
\tau :=  \min \left\lbrace \widetilde C , \left( \frac{3}{4} \right)^s \widehat{C}\rho^{2s} \, \right\rbrace  \, \frac{  \| y_1 v \|_{L^1(K)}}{2\sS^{n+2s+2}} \left( 1  + \rho^{2s}\|c^+\|_{L^\infty(U)} \right)^{-1} ,
\end{equation*} 
we have that
\begin{equation*}
\begin{cases}
(-\Delta)^s w + cw \leq 0 \quad \text{ in } B^+ \setminus \ol{B_*},
\\
v \ge w \quad \text{ in } \R^n_+ \setminus \left( B^+ \setminus \ol{B_*} \right) .
\end{cases}
\end{equation*}
Hence, recalling also~\eqref{eq:condition with eigenvalue}, the comparison principle in~\cite[Proposition~3.1]{MR3395749} 
gives that,
in~\( B^+ \setminus \ol{B_*} \),
\begin{equation*}
v(x) \geq w(x)  =   \min \left\lbrace \widetilde C , \left( \frac{3}{4} \right)^s \widehat{C} \, \rho^{2s}\,\right\rbrace \, \frac{ \| y_1 v \|_{L^1(K)} }{2\sS^{n+2s+2}} 
\,\left( 1+\rho^{2s}\|c^+\|_{L^\infty(U)} \right)^{-1}
\, x_1 \, \psi_B (x) ,
\end{equation*}
from which the desired result (in case~$(ii)$) follows.
\smallskip 

In case~$(i)$, that is the case where~$\de_{\pa H}(a) = a_1=0$ (i.e., $a \in \pa H$), it is easy to check that the desired result follows by repeating the above argument, but simply with~$\varphi := x_1 \psi_B$ in place of the barrier in~\eqref{eq:def new barrier} and~$B^+$ in place of~$B^+ \setminus \ol{B_*}$. The argument significantly simplifies, as, in this case, we do not need to check the inequality~$w \le v$ in~$B_*$, as given by~\eqref{deiwty098765400}, before applying~\cite[Proposition~3.1]{MR3395749}.
\smallskip

In case~$(iii)$, i.e., the case where~$\de_{\pa H}(a) = a_1 > \rho$, the desired result follows by repeating the same argument in item~$(ii)$. Notice that in this case, we have that~$B \setminus \ol{B}_* = B = B^+$, and hence (as in case~$(i)$) the argument significantly simplifies, as we do not need to check the inequality~$w \le v$ in~$B_*$ before applying~\cite[Proposition~3.1]{MR3395749}.
\end{proof}

\begin{thm}[Quantitative nonlocal Hopf Lemma]\thlabel{thm:quantitative Hopf}
Under the assumptions of \thref{Quantitative Maximum Principle}, if~$a\in H$, $v \in C (\ol{B_\rho(a)} \setminus B_\rho(a_*) )$,
and~$v(p)=0$ for some~$p=(p_1,p_2,\dots,p_n) \in  \pa B_\rho(a)\cap H$, then we have that
\begin{equation}\label{eq:Hopf}
\liminf_{t\to 0^+} \frac{ v(p-t \nu(p)) - v(p) }{ t^s \,  \de_{\pa H}(p-t \nu(p)) } \ge \ol{C} \, \| \delta_{\partial H} v\|_{L^1(K)},
\end{equation}
with
\begin{equation}\label{eq:def ol C}
\ol{C} := C(n,s) \frac{\rho^{3s}}{\sS^{n+2s+2}} \Big(1+\rho^{2s}\|c^+\|_{L^\infty(U)} \Big)^{-1}.
\end{equation}
Here, $\nu(p)$ is the exterior unit normal of~$\pa B_\rho(a)$ at~$p$.
\end{thm}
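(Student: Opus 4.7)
The plan is to obtain \thref{thm:quantitative Hopf} as a direct consequence of \thref{Quantitative Maximum Principle}, by evaluating the pointwise lower bound on $v$ along the inward normal segment emanating from $p$. Since $a \in H$, the trichotomy in \thref{Quantitative Maximum Principle} puts us in either case~(ii) or case~(iii); in both cases the conclusion reads
\begin{equation*}
v(x) \;\geq\; C_0 \, \| \delta_{\partial H} v \|_{L^1(K)} \, \delta_{\partial H}(x) \, \bigl(\rho^2 - |x-a|^2\bigr)_+^s
\qquad \text{for a.e.\ } x \in B_\rho(a) \cap H,
\end{equation*}
with $C_0 := C(n,s) \, \rho^{2s} / \sS^{n+2s+2} \bigl(1 + \rho^{2s} \|c^+\|_{L^\infty(U)}\bigr)^{-1}$.

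Next, I would upgrade this a.e.\ estimate to a pointwise one near $p$. Observing that $|p - a_\ast|^2 = |p - a|^2 + 4 p_1 a_1 = \rho^2 + 4 p_1 a_1 > \rho^2$ (using $p_1>0$ and $a_1>0$), the point $p$ lies outside $\overline{B_\rho(a_\ast)}$, hence in the region where $v$ is continuous by assumption. As the right-hand side above is continuous in $x$, the inequality passes to the continuous representative of $v$ pointwise on a neighborhood of $p$ inside $\overline{B_\rho(a)} \setminus B_\rho(a_\ast)$.

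The geometric specialization is then straightforward. Since $p \in \partial B_\rho(a)$, we have $\nu(p) = (p-a)/\rho$; setting $q_t := p - t\nu(p)$, one computes $q_t - a = (1 - t/\rho)(p - a)$, so
\begin{equation*}
\rho^2 - |q_t - a|^2 \;=\; \rho^2 \bigl(1 - (1 - t/\rho)^2\bigr) \;=\; t \, (2\rho - t),
\end{equation*}
and for small $t>0$ the point $q_t$ lies in $B_\rho(a) \cap H$ with $\delta_{\partial H}(q_t)>0$ (by continuity, since $p_1>0$). Plugging $x = q_t$ into the pointwise bound, using $v(p)=0$, and dividing by $t^s \delta_{\partial H}(q_t)$ gives
\begin{equation*}
\frac{v(q_t) - v(p)}{t^s \, \delta_{\partial H}(q_t)} \;\geq\; C_0 \, \| \delta_{\partial H} v \|_{L^1(K)} \, (2\rho - t)^s.
\end{equation*}
Passing to the liminf as $t \to 0^+$ yields \eqref{eq:Hopf} with $\ol{C} = 2^s \rho^s \, C_0$, which matches \eqref{eq:def ol C} after absorbing $2^s$ into the dimensional constant.

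I do not expect any substantive obstacle in carrying out this plan: the genuine technical content has already been absorbed into the antisymmetric barrier (\thref{prop:newantisymmetricbarrier}) and into the boundary-type maximum principle \thref{Quantitative Maximum Principle}. The only nontrivial step is the continuity upgrade of the a.e.\ estimate, which is resolved by the explicit verification that $p$ (and a small ball around it) lies in $\overline{B_\rho(a)} \setminus B_\rho(a_\ast)$; after that, the argument reduces to the elementary identity $\rho^2 - |q_t - a|^2 = t(2\rho - t)$ and a division.
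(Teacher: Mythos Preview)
Your approach is the same as the paper's, but there is one genuine gap. You assert that ``since $a\in H$, the trichotomy in \thref{Quantitative Maximum Principle} puts us in either case~(ii) or case~(iii)''; however, the three cases in \thref{Quantitative Maximum Principle} do \emph{not} exhaust all positions of $a\in H$: case~(ii) requires $0<\delta_{\partial H}(a)\le \rho/2$ and case~(iii) requires $\delta_{\partial H}(a)>\rho$, so the intermediate range $\rho/2<\delta_{\partial H}(a)\le\rho$ is not covered, and your pointwise lower bound is unavailable there.

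The paper handles this missing range by applying case~(iii) of \thref{Quantitative Maximum Principle} with $\rho$ replaced by $\rho/2$ (note $p-\rho\,\nu(p)=a$, so the center is unchanged): since $\delta_{\partial H}(a)>\rho/2$, case~(iii) applies to $B_{\rho/2}(a)$, and one still has $B_{\rho/2}(a)\cap H\subset U$ and $\|c^+\|_{L^\infty(U)}<\lambda_1(B_\rho(a)\cap H)\le\lambda_1(B_{\rho/2}(a)\cap H)$. The only effect is that $\rho$ is replaced by $\rho/2$ in the constant, which is absorbed into $C(n,s)$. With this patch your argument goes through exactly as written; the computation $\rho^2-|q_t-a|^2=t(2\rho-t)$ and the continuity upgrade near $p$ (via $|p-a_\ast|^2=\rho^2+4p_1a_1>\rho^2$) are correct and match the paper.
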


\begin{proof}
The aim is to use the estimate
in~\eqref{eq:quantitative maximum principle} of Proposition~\ref{Quantitative Maximum Principle}.

More precisely, 
\begin{itemize}
\item if~$0 < \de_{\pa H}(p - \rho \, \nu(p)) \le \rho/2$, we use item~$(ii)$
with~$a$ replaced by~$p - \rho \, \nu(p)$
\item if~$\rho/2 < \de_{\pa H}(p - \rho \, \nu(p)) \le \rho$, 
we use item~$(iii)$
with~$a$ replaced by~$p - \rho \, \nu(p)$ and~$\rho$ replaced by~$\rho/2$,
\item if~$\de_{\pa H}(p - \rho \, \nu(p)) > \rho$, we
use item~$(iii)$
with~$a$ replaced by~$p - \rho \, \nu(p)$.
\end{itemize}
Accordingly, using~\eqref{eq:quantitative maximum principle}
with~$x:=p - t \nu(p)$, we have that
\begin{eqnarray*}
v\big(p - t \nu(p)\big) &\geq& C \| \delta_{\partial H} v\|_{L^1(K)} \, \de_{\pa H}\big(p - t \nu(p)\big) \, \Big( \rho^2 - |p - t \nu(p)-(p - \rho\nu(p))|^2 \Big)_+^s \\
&=& C \| \delta_{\partial H} v\|_{L^1(K)} \, \de_{\pa H}\big(p - t \nu(p)\big) \, 
t^s(2\rho-t)^s \\
&\ge&C \| \delta_{\partial H} v\|_{L^1(K)} \, \de_{\pa H}\big(p - t \nu(p)\big) \, 
t^s\rho^s.
\end{eqnarray*}
Taking the limit as~$t\to0^+$, 
we obtain the desired result in~\eqref{eq:Hopf} and~\eqref{eq:def ol C}.
\end{proof}

\begin{thm}[Quantitative nonlocal Serrin corner point lemma]\thlabel{thm:quantitative corner lemma}
Let~$\Om \in \R^n$ be an open bounded set with~$C^2$ boundary such that the origin~$0\in \pa\Om$. Assume that the hyperplane~$\pa H= \left\lbrace x_1 = 0 \right\rbrace$ is orthogonal to~$\pa\Om$ at~$0$.
Let the assumptions in \thref{Quantitative Maximum Principle} be satisfied with~$U := \Om \cap H $, $a:=(0,\rho,0,\dots,0)\in \pa H$ and~$0 \in \pa U \cap \pa( B_\rho(a)\cap H)$.

Then, setting~$\eta:=(1,1,0\dots,0)$, we have that
\begin{equation}\label{eq:Serrin corner lemma}
\frac{v(t \eta)}{t^{1+s}} \ge \ol{C} \, \| \delta_{\partial H} v\|_{L^1(K)}  \quad \text{ for } 0< t < 
\frac\rho2,
\end{equation}
with~$\ol{C}$ in the same form as~\eqref{eq:def ol C}.
\end{thm}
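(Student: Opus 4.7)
The strategy is to apply the quantitative maximum principle of \thref{Quantitative Maximum Principle} in case~$(i)$, since the assumption $a=(0,\rho,0,\dots,0)\in\pa H$ means exactly that $\de_{\pa H}(a)=0$. All hypotheses of \thref{Quantitative Maximum Principle} are granted, so from~\eqref{eq:quantitative maximum principle} we obtain, for a.e.\ $x\in H\cap B_\rho(a)$,
\begin{equation*}
v(x)\ge C\,\|\delta_{\pa H}v\|_{L^1(K)}\,\de_{\pa H}(x)\,\bigl(\rho^2-|x-a|^2\bigr)_+^s,
\end{equation*}
with $C=C(n,s)\,\rho^{2s}\,\sS^{-(n+2s+2)}\bigl(1+\rho^{2s}\|c^+\|_{L^\infty(U)}\bigr)^{-1}$.

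Next, I would specialize to $x=t\eta=(t,t,0,\dots,0)$ and check that this point lies in $H\cap B_\rho(a)$ in the range $0<t<\rho/2$. Indeed $(t\eta)_1=t>0$ gives $t\eta\in H$, and
\begin{equation*}
|t\eta-a|^2=t^2+(t-\rho)^2=\rho^2-2t(\rho-t),
\end{equation*}
so that $\rho^2-|t\eta-a|^2=2t(\rho-t)>0$ for $0<t<\rho$, whence $t\eta\in B_\rho(a)$. Moreover, since the distance from $t\eta$ to $\pa H=\{x_1=0\}$ equals $t$, we have $\de_{\pa H}(t\eta)=t$.

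Substituting and using $\rho-t>\rho/2$ for $0<t<\rho/2$ yields
\begin{equation*}
v(t\eta)\ge C\,\|\delta_{\pa H}v\|_{L^1(K)}\cdot t\cdot\bigl(2t(\rho-t)\bigr)^s\ge C\,\rho^s\,\|\delta_{\pa H}v\|_{L^1(K)}\,t^{1+s}.
\end{equation*}
Dividing by $t^{1+s}$ and absorbing the factor $\rho^s$ into the constant produces the sought bound~\eqref{eq:Serrin corner lemma} with $\ol C=C(n,s)\,\rho^{3s}\sS^{-(n+2s+2)}\bigl(1+\rho^{2s}\|c^+\|_{L^\infty(U)}\bigr)^{-1}$, matching the form announced in~\eqref{eq:def ol C}. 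The argument is a one-line geometric computation once \thref{Quantitative Maximum Principle} is in hand, so there is no serious obstacle; the only subtle point worth flagging is the a.e.\ nature of the inequality from \thref{Quantitative Maximum Principle} versus the pointwise statement along the segment $\{t\eta\}$, which can be handled either by appealing to the continuity of $v$ near the corner (consistent with the framework in which the result is applied in \thref{thm:Main Theorem}) or by a standard approximation/averaging argument along a tube around the segment, as is done for the analogous Hopf-type estimate.
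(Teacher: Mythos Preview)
Your proof is correct and follows essentially the same route as the paper: apply \thref{Quantitative Maximum Principle} in case~$(i)$ (since $\de_{\pa H}(a)=0$), evaluate at $x=t\eta$, compute $\rho^2-|t\eta-a|^2=2t(\rho-t)$, and use $\rho-t>\rho/2$ to extract the factor $\rho^s$ and arrive at~\eqref{eq:def ol C}. Your additional remark on the a.e.\ versus pointwise issue is a fair observation that the paper glosses over; resolving it by continuity (as you suggest) is the natural fix in the setting where the theorem is applied.
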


\begin{proof}
Using~\eqref{eq:quantitative maximum principle} (item~$(i)$) with~$x= t \eta$ (for~$0< t <\rho/2$), we see that
\begin{eqnarray*}
v(t\eta) &\geq& C \| \delta_{\partial H} v\|_{L^1(K)} \, \de_{\pa H}(t\eta) \, \left( \rho^2 - |t\eta-a|^2 \right)_+^s \\
&=& C \| \delta_{\partial H} v\|_{L^1(K)} \, 2^s t^{1+s}( \rho-t)^s\\
&\ge& C \| \delta_{\partial H} v\|_{L^1(K)} \, t^{1+s} \rho^s.
\end{eqnarray*}
The desired result follows. 
\end{proof}

\begin{rem}
{\rm
In the particular case where~$p$ is far from~$\pa H$ (say~$\de_{\pa H}(p) > 2 \rho$), \thref{thm:quantitative Hopf} is a slight improvement of a result that could essentially be deduced from~\cite[Lemma~4.1]{MR4577340}. The key novelty and huge improvement provided by \thref{Quantitative Maximum Principle} is that \thref{thm:quantitative Hopf} remains valid regardless of the closeness of~$p$ to~$\pa H$.
Notice that~\eqref{eq:Hopf} becomes worse as~$p$ becomes closer to~$\pa H$, and, heuristically, becomes qualitatively similar to~\eqref{eq:Serrin corner lemma} for~$\de_{\pa H} (p) \to 0$.
}
\end{rem}

\section{Quantitative moving planes method and proof of Theorem~\ref{thm:Main Theorem}}\label{sec:main result}

Given~\(e\in \Sph^{n-1}\), \(\mu \in \R\), and~$A \subset \R^n$, we will use the following definitions, as in~\cite{DPTVParallelStability}: 
$$
\pi_\mu :=\{ x\in \R^n \text{ : } x\cdot e = \mu \} , \quad  \text{ i.e., the hyperplane orthogonal to }e \text{ and containing } \mu e ,
$$
$$
H_\mu :=\{x\in \R^n  \text{ : } x\cdot e>\mu \} , \quad \text{ i.e., the right-hand half space with respect to } \pi_\mu  ,
$$
$$
A_\mu := A \cap H_\mu , \quad \text{ i.e., the portion of }A \text{ on the right-hand side of } \pi_\mu ,
$$
$$
x_\mu' := x-2(x\cdot e -\mu) e ,  \quad  \text{ i.e., the reflection of } x \text{ across } \pi_\mu ,
$$
$$
A_\mu' := \{x\in \R^n \text{ : } x'_\mu\in A_\mu \} , \quad \text{ i.e., the reflection of } A_\mu \text{ across } \pi_\mu .
$$
It is clear from the above definitions that
$$
H_\mu' =\{x\in \R^n  \text{ : } x\cdot e<\mu \} , \quad \text{ i.e., the left-hand half space with respect to } \pi_\mu .
$$

Let~$\Om$ be smooth and let~$u$ be a solution of~\eqref{eq:Dirichlet problem}. Given~$\mu \in \R$ and setting
\begin{equation*}
v_\mu(x) := u(x) - u( x_\mu' ), \qquad \text{ for }  x\in \R^n ,
\end{equation*}
we have that
\begin{equation*}
(-\Delta)^s v_\mu (x) = f(u(x)) - f(u(x_\mu')) = -c_\mu(x) v_\mu (x) 
\qquad \text{ in } \Omega_\mu' ,
\end{equation*}
where
\begin{equation*}
c_\mu(x) := 
\begin{cases}
-\displaystyle \frac{f(u(x)) - f(u(x_\mu'))}{u(x) - u(x_\mu')} \quad & \text{ if } u(x) \neq u(x_\mu') ,
\\
0 \quad & \text{ if } u(x) = u(x_\mu') .
\end{cases}
\end{equation*}

Thus, $v_\mu$ is an antisymmetric function satisfying
\begin{equation*}
\begin{cases}
(-\Delta)^s v_\mu  +c_\mu v_\mu = 0 \quad &\text{ in } \Omega_\mu ' ,\\
v_\mu = u \quad &\text{ in } (\Omega\cap H_\mu') \setminus \Omega_\mu' ,\\
v_\mu = 0 \quad &\text{ in } H_\mu' \setminus \Omega ,
\end{cases}
\end{equation*}
with~\(c_\mu \in L^\infty (\Omega_\mu ')\). Moreover, we have that
\begin{equation}\label{eq: upper bound infinity norm of c}
\| c_\mu \|_{L^\infty(\Omega_\mu ')} \leq [f]_{C^{0,1}([0,\| u\|_{L^\infty(\Omega)}])}.
\end{equation}

Given a direction~$e \in \Sph^{n-1}$ and defining~$\La=\La_e:= \sup \left\lbrace  x \cdot e \, \text{ : } \, x \in \Om \right\rbrace$, we consider the \emph{critical value}
\begin{equation*}
\la=\la_e:= \inf \Big\{ \ul{\mu} \in \R \, \text{ : } \, \Omega_\mu ' \subset \Om \text{ for all } \mu \in \big( \ul{\mu} , \La \big)  \Big\} ,
\end{equation*}
for which, as usual in the method of the moving planes, either
\begin{itemize}
\item[(i)] $\pa \Om_\la'$ is internally tangent to~$\pa \Om$ at a point~$p \notin \pi_\la$,
\item[(ii)] or the critical plane~$\pi_\la$ is orthogonal to~$\pa\Om$ at a point~$p \in \pi_\la \cap \pa \Om$.
\end{itemize}
\smallskip

We recall that~$v_\mu \ge 0 $ in~$\Om_\mu'$ for all~$\mu \in \left[ \la, \La \right]$: see, e.g., \cite[Lemma~4.1]{DPTVParallelStability}.

\begin{lem} \thlabel{lem:symmetric difference}
Let~$\Om$ be an open bounded set of class~$C^2$ satisfying the uniform interior sphere condition with radius~$r_\Om>0$. 
Let~$f \in C^{0,1}_{\mathrm{loc}}(\R)$ be such that~\(f(0)\geqslant 0\) and define~$R$ as in~\eqref{eq:def R}. Let~$u$ be a weak solution of~\eqref{eq:Dirichlet problem} satisfying~\eqref{eq:higher boundary regularity}.

Then, for any~\(e\in \Sph^{n-1}\), we have that
\begin{equation} \label{eq:lemma symmetric difference}
\int_{(\Omega\cap H'_\lambda) \setminus \Omega_\lambda'} \delta_{\pi_\lambda}(x) u (x) \dd x \leq C  [ \pa_\nu^s u]_{\partial \Om} ,
\end{equation}
where
\begin{equation}\label{eq:constant in lemma symmetric}
C := C(n,s)  \left( \frac{\diam \Om}{R} \right)^{3s} (\diam \Omega)^{n+2-s} . 
\end{equation}
\end{lem}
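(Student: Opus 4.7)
The plan is to combine the quantitative nonlocal Hopf lemma (\thref{thm:quantitative Hopf}) and the quantitative nonlocal corner point lemma (\thref{thm:quantitative corner lemma}), applied to the antisymmetric function $v_\lambda = u - u(\cdot\,'_\lambda)$ in cases~(i) and~(ii) respectively. Recall that $v_\lambda$ is nonnegative in $H'_\lambda$, vanishes at the touching point $p$, and solves $(-\Delta)^s v_\lambda + c_\lambda v_\lambda = 0$ in $\Omega'_\lambda$ with $\|c_\lambda\|_\infty \le [f]_{C^{0,1}([0,\|u\|_{L^\infty(\Omega)}])}$ by~\eqref{eq: upper bound infinity norm of c}. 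The definition of $R$ in~\eqref{eq:def R}, combined with~\eqref{eq:Faber-Krahn}, guarantees the eigenvalue condition~\eqref{eq:condition with eigenvalue} with $\rho = R$ and $c = c_\lambda$, and bounds $1 + R^{2s}\|c_\lambda\|_\infty$ by a constant depending only on $n,s$. Set $K := (\Omega \cap H'_\lambda) \setminus \Omega'_\lambda$, on which $v_\lambda = u$, and observe that $\mathcal{S} \le C\diam\Omega$. Consequently the constant $\overline C$ appearing in Theorems~\ref{thm:quantitative Hopf}--\ref{thm:quantitative corner lemma} satisfies $\overline C^{-1} \le C(n,s)(\diam\Omega/R)^{3s}(\diam\Omega)^{n+2-s}$, which matches~\eqref{eq:constant in lemma symmetric}.

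In case~(i), the uniform interior sphere condition at $p'_\lambda$ provides the ball $B_R(p'_\lambda - R\nu_\Omega(p'_\lambda))\subset\Omega$; reflecting across $\pi_\lambda$ shows that $B_R(a)\cap H'_\lambda\subset\Omega'_\lambda$ for $a := p - R\nu_\Omega(p)$. Applying \thref{thm:quantitative Hopf} yields
\begin{equation*}
\liminf_{t\to 0^+}\frac{v_\lambda(p-t\nu_\Omega(p))}{t^s\,\delta_{\pi_\lambda}(p-t\nu_\Omega(p))} \ge \overline C \int_K \delta_{\pi_\lambda}(x)\,u(x)\,\dd x.
\end{equation*}
Using $u(p)=u(p'_\lambda)=0$, the reflection identity $\nu_\Omega(p)=(\nu_\Omega(p'_\lambda))'_\lambda$, and~\eqref{eq:def fracnormal derivative}, the limit on the left equals $(\partial_\nu^s u(p'_\lambda) - \partial_\nu^s u(p))/\delta_{\pi_\lambda}(p)$, which is at most $2[\partial_\nu^s u]_{\partial\Omega}$ since $|p-p'_\lambda|=2\delta_{\pi_\lambda}(p)$. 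This yields the lemma in case~(i).

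In case~(ii), work in coordinates where $p=0$, $\pi_\lambda=\{x_1=0\}$, and the inward unit normal at $p$ is $e_2$. The interior ball $B_{r_\Omega}(r_\Omega e_2)\subset\Omega$ is symmetric under reflection across $\pi_\lambda$, so $B_R(Re_2)\cap H'_\lambda \subset \Omega\cap H'_\lambda = \Omega'_\lambda$. \thref{thm:quantitative corner lemma} with $\rho=R$ and $a=Re_2$ gives
\begin{equation*}
\frac{v_\lambda(t\eta)}{t^{1+s}} \ge \overline C \int_K \delta_{\pi_\lambda}(x)\,u(x)\,\dd x, \qquad 0<t<R/2,\quad \eta=(1,1,0,\ldots,0).
\end{equation*}
For the matching upper bound, let $g := u/\delta^s \in C^1(\ol\Omega)$ by~\eqref{eq:higher boundary regularity} and let $d$ be a smooth extension of $\delta$ near $\partial\Omega$; then
\begin{equation*}
v_\lambda(t\eta) = d(t\eta)^s\bigl(g(t\eta)-g(t\eta'_\lambda)\bigr) + g(t\eta'_\lambda)\bigl(d(t\eta)^s-d(t\eta'_\lambda)^s\bigr).
\end{equation*}
The first summand equals $2\partial_1 g(0)\,t^{1+s}+O(t^{2+s})$ by the $C^1$ regularity of $g$. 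For the second, differentiating the eikonal identity $|\nabla d|^2=1$ and using $\nabla d(0)=e_2$ gives $\partial_i\partial_2 d(0)=0$ for every $i$; hence $d(t\eta)-d(t\eta'_\lambda)=O(t^3)$ and the second summand is $O(t^{2+s})$. Passing to the limit, $v_\lambda(t\eta)/t^{1+s}\to 2\partial_1 g(0)$ as $t\to 0^+$. Since $g|_{\partial\Omega}=-\partial_\nu^s u|_{\partial\Omega}$ (from~\eqref{eq:def fracnormal derivative} and $u|_{\partial\Omega}=0$) and $e_1$ is tangent to $\partial\Omega$ at $p$, one gets $|\partial_1 g(0)|\le[\partial_\nu^s u]_{\partial\Omega}$, and the lemma follows in case~(ii).

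The most delicate step is the upper-bound analysis in case~(ii): the function $v_\lambda$ vanishes only to order $t^{s+1}$ at the corner $p$, so the expansion must be carried out to second order, and the geometric identity $\partial_1\partial_2 d(0)=0$ is essential---without it, the leading term of $v_\lambda(t\eta)/t^{s+1}$ would contain the generically nonzero quantity $g(0)=-\partial_\nu^s u(p)$, which is not controlled by $[\partial_\nu^s u]_{\partial\Omega}$. The assumption~\eqref{eq:higher boundary regularity} that $u/\delta^s\in C^1(\ol\Omega)$ is precisely what enables this fine expansion, playing the role of the quantitative analog of Serrin's classical corner point lemma in the nonlocal setting.
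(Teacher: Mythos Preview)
Your argument is correct and follows the same route as the paper: apply \thref{thm:quantitative Hopf} in case~(i) and \thref{thm:quantitative corner lemma} in case~(ii), then expand $v_\lambda(t\eta)$ via $u=\delta^s\cdot(u/\delta^s)$ together with the vanishing of the off-diagonal entry $\partial_1\partial_2\delta(0)$ (the paper phrases this as ``$\nabla^2\delta(0)$ is diagonal''). One slip worth fixing: the equality $\Omega\cap H'_\lambda=\Omega'_\lambda$ you write in case~(ii) is false in general---the difference is exactly the set $K$---but your symmetry observation on the ball already yields the required inclusion $B_R(Re_2)\cap H'_\lambda\subset\Omega'_\lambda$ directly (reflect $B_R(Re_2)\cap H_\lambda\subset\Omega_\lambda$), so the proof goes through unchanged.
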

\begin{proof} 
We use the method of moving planes and, without loss of generality, we take~$e:=-e_1$ and~$\la =0$.

$(i)$ Assume that we are in the first case, that is, the boundary of~$\Om'_\la$ is internally tangent to~$\pa \Om$ at some point~\(p\in (\partial \Om \cap \partial \Om_\lambda') \setminus \{x_1=0\}\).
Notice that, by definition of~$r_\Om$, we have that~$p - \nu(p) \, r_\Om \in H_\la'$.
We now use \thref{thm:quantitative Hopf} with~$H:=H_\la'$, $U:=\Om'_\la$, \(K := (\Om \cap H_\la') \setminus \Om_\la'\), $\rho:=R$ and~$a:= p - R \, \nu(p)$, and we obtain from~\eqref{eq:Hopf} that
\begin{equation}\label{eq:step in lemma}
\int_{(\Omega \cap H_\lambda') \setminus \Omega_\lambda '} y_1 u(y) \dd y
=
\int_{(\Omega \cap H_\lambda') \setminus \Omega_\lambda '} y_1 v_\lambda (y) \dd y 
\le
C \, \frac{ \pa_\nu^s v_\lambda(p)}{p_1} .
\end{equation}
As noticed in Remark~\ref{rem:remark on Faber}, condition~\eqref{eq:condition with eigenvalue}
is satisfied in light of~\eqref{eq:def R} and~\eqref{eq:Faber-Krahn}.
Also notice that, since, for any~$x\in(\Omega\cap H_\lambda') \setminus \Omega_\lambda'$, the point~\(x_\lambda'\) belongs to the reflection of~$\Om$ across~$\{x_1=0\}$, we have that
\eqref{eq:def Sup vecchio M^-1} holds true with~$\sS:= \diam \Om.$
Hence, \thref{Quantitative Maximum Principle}, \eqref{eq: upper bound infinity norm of c} and~\eqref{eq:def R} give that the constant in~\eqref{eq:step in lemma} is as in~\eqref{eq:constant in lemma symmetric}.
Noting that
\begin{equation*}
\frac{ \pa_\nu^s v_\lambda(p)}{p_1} = \frac{ \pa_\nu^s u(p) - \pa_\nu^s u(p_\lambda') }{p_1} = 2 \, \frac{( \pa_\nu^s u(p_\lambda') - \pa_\nu^s u(p) )}{\vert (p_\lambda')_1 - p_1 \vert} \le 2 \, [ \pa_\nu^s u]_{\partial \Om},
\end{equation*}
we obtain~\eqref{eq:lemma symmetric difference} from~\eqref{eq:step in lemma}. 

$(ii)$ In the second case, we can assume (without loss of generality) that~$ 0 = p \in \partial \Om \cap \{x_1=0\}$ and the exterior normal of~$\pa \Om $ at~$ 0 $ agrees with~$e_2$ (which is contained in~\(\{x_1=0\}\)).
We exploit \thref{thm:quantitative corner lemma} with~$H:=H_\la'$, $U:=\Omega'_\lambda$,
$K := (\Om \cap H_\la') \setminus \Om_\la'$, $\rho:=R$ and~$a:= R e_2$,
thus obatining from~\eqref{eq:Serrin corner lemma} that
\begin{equation}\label{eq:preperstimauno}
\int_{(\Omega\cap H_\lambda') \setminus \Omega_\lambda'} x_1 u (x) \dd x \le C \, \frac{ | v_\lambda( t \eta ) | }{t^{1+s}}
\end{equation}
with~$C$ in the same form as in~\eqref{eq:constant in lemma symmetric} and~$\eta := (1,1,0,\dots,0)$. 

Setting~$\de(x)$ to be a smooth function in~$\Om$ that coincides with~$\mathrm{dist}(x, \R^n \setminus \Om)$ in a neighborhood of~$\pa\Om$,
the condition in~\eqref{eq:higher boundary regularity} gives that
$$
u(x) = \de^s(x) \psi (x) \quad \text{ with }\, \psi := \frac{u}{\de^s} \in C^1(\ol{\Om}).
$$
Then, setting~$\eta_* := (-1, 1, 0 \dots, 0)$, we compute that
\begin{equation}\label{eq:preTaylor}
v_\lambda( t \eta ) = u (t \eta) - u (t \eta_* ) = \psi(t \eta) \left[ \de^s(t\eta) - \de^s(t \eta_*) \right] + \de^s (t  \eta_* ) \left[ \psi(t \eta) - \psi(t \eta_* ) \right] .
\end{equation}
As in~\cite[Proof of Lemma~4.3]{MR3395749}, we exploit classical properties of the distance function (see e.g.~\cite{MR1814364}), namely that~$\de(0)=0$, $\na \de (0)= e_2$ and the fact that~$\na^2 \de (0)$ is diagonal, to obtain the Taylor expansions
$$
\de^s(t \eta) = t^s \left( 1+ \frac{s}{2} \langle \na^2 \de(0) \eta, \eta \rangle \, t + o(t) \right) 
$$
and
$$
\de^s(t \eta_* ) = t^s \left( 1+ \frac{s}{2} \langle \na^2 \de(0) \eta_* , \eta_* \rangle \, t + o(t) \right).
$$
As a consequence,
\begin{equation*}
\de^s(t\eta) - \de^s(t \eta_* ) = o(t^{1+s}) .
\end{equation*}
By the continuity of~$\psi$ over~$\ol{\Om}$, this guarantees that
\begin{equation}\label{dewghokjhg09876543w-desfr-34t}
\lim_{t\to 0^+} \frac{\psi(t \eta) \left[ \de^s(t\eta) - \de^s(t \eta_* ) \right]}{t^{1+s}} = 0.
\end{equation}

We now estimate the last summand 
in~\eqref{eq:preTaylor}.
To this aim, a first-order Taylor expansion (for which we use~\eqref{eq:higher boundary regularity}) gives that
$$
\psi (t \eta) = \psi (0) + t \langle \na \psi(0), \eta \rangle + o(t) 
\qquad{\mbox{and}}\qquad
\psi (t \eta_* ) = \psi (0) + t \langle \na \psi(0), \eta_* \rangle + o(t) ,
$$
and therefore
$$
\psi (t \eta) - \psi (t \eta_* ) = 2 t \langle \na \psi(0), e_1 \rangle + o(t) .
$$

Hence, \eqref{eq:preTaylor} and~\eqref{dewghokjhg09876543w-desfr-34t}
give that
$$
v_\lambda( t \eta ) = 2 t^{1+s} \langle \na \psi (0), e_1 \rangle + o(t^{1+s}).
$$
Recalling that~$\psi = u/\de^s$, the definitions in~\eqref{eq:def fracnormal derivative} and~\eqref{eq:def Lipschitz seminorm}, and that~$e_1$ is orthogonal to~$e_2 = \nu (0)$,
we obtain that
$$
\lim_{t \to 0^+} \frac{ | v_\lambda( t \eta ) | }{ t^{1+s} } \le
2 \big|\langle \na \psi (0), e_1 \rangle\big|\le
 2  [\pa_\nu^s u]_{\pa\Om}.
$$
This and~\eqref{eq:preperstimauno} lead to
\begin{equation*}
\int_{(\Omega\cap H_\lambda') \setminus \Omega_\lambda'} x_1 u (x) \dd x  \leqslant C [\pa_\nu^s u]_{\partial \Om} ,
\end{equation*}
and~\eqref{eq:lemma symmetric difference} follows in this case as well. 
\end{proof}

\begin{prop}\thlabel{prop:vecchia prop 4.4}
Let~\(\Omega\) be an open bounded set of class~$C^2$ satisfying the uniform interior sphere condition with radius~\(r_\Omega >0\). Let~$f \in C^{0,1}_{\mathrm{loc}}(\R)$ be such that~\(f(0)\geqslant 0\), and define~$R$ as in~\eqref{eq:def R}. Let~$u$ be a weak solution of~\eqref{eq:Dirichlet problem} satisfying~\eqref{eq:higher boundary regularity}.

For~$e\in \Sph^{n-1}$, denoting with~$\Om'$ the reflection of~$\Om$ with respect to the critical hyperplane~$\pi_\la$, we have that
\begin{equation}\label{eq:vecchia prop 4.4}
\vert \Omega \triangle \Omega' \vert \leqslant C_\star [\pa_\nu^s u]_{\pa\Om}^{\frac1{s+2}}, 
\end{equation}
where
\begin{equation}\label{eq:C star}
C_\star := C(n,s) \left[1+
\frac{ 1 }{ r_\Om^{1-s}  \left( f(0)+\|u\|_{L_s(\R^n)} \right) } \right]
\left( \frac{\diam \Omega}{R} \right)^{2n+2+6s} 
R^{n+2-s} r_\Om^{n-1}.
\end{equation}
\end{prop}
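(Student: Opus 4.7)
\emph{Plan of proof.} The strategy is a standard optimization argument that converts the weighted $L^1$ control from \thref{lem:symmetric difference} into a measure estimate on the symmetric difference $\Omega\triangle\Omega'$. First I would observe that
$$|\Omega\triangle\Omega'|=2\,|(\Omega\cap H_\lambda')\setminus\Omega_\lambda'|,$$
because the portion of the symmetric difference lying in $H_\lambda'$ is exactly the set $A:=(\Omega\cap H_\lambda')\setminus\Omega_\lambda'$ (using $\Omega_\lambda'\subset\Omega\cap H_\lambda'$, which comes from the definition of the critical value $\lambda$), while the portion in $H_\lambda$ is its reflection across $\pi_\lambda$ and thus has the same measure. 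On $A$ one has $u(x)>0$ while $u(x_\lambda')=0$, so $v_\lambda\equiv u$ there and the integral bounded in \thref{lem:symmetric difference} is $\int_A\delta_{\pi_\lambda}u\,dx$.

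Next I would establish a quantitative Hopf-type lower bound of the form
\begin{equation*}
u(x)\ge c_0\,\delta_{\partial\Omega}^s(x)\qquad\text{for all }x\in\Omega,
\end{equation*}
with $c_0=c(n,s)\,r_\Omega^{1-s}\bigl(f(0)+\|u\|_{L_s(\R^n)}\bigr)$. This is obtained via the uniform interior sphere condition by placing, for each boundary point, an inscribed ball $B_{r_\Omega}(z)\subset\Omega$ and comparing $u$ to the explicit torsion function on that ball, exploiting the inequality $(-\Delta)^s u=f(u)\ge f(0)$ together with a nonlocal tail contribution coming from $\|u\|_{L_s(\R^n)}$; this is a fractional-Hopf lemma for non-negative weak supersolutions.

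With these ingredients, for a free parameter $t\in(0,r_\Omega)$ I would introduce the \emph{bulk} subset
$$A_t:=\bigl\{x\in A \text{ : } \delta_{\pi_\lambda}(x)>t \text{ and } \delta_{\partial\Omega}(x)>t\bigr\}.$$
On $A_t$ the lower bound yields $u(x)\ge c_0 t^s$ and $\delta_{\pi_\lambda}(x)\ge t$, so \thref{lem:symmetric difference} gives
$$|A_t|\le \frac{1}{c_0\, t^{s+1}}\int_{A_t}\delta_{\pi_\lambda}u\,dx\le \frac{C}{c_0\, t^{s+1}}\,[\partial_\nu^s u]_{\partial\Omega},$$
with $C$ as in \eqref{eq:constant in lemma symmetric}. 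The complement satisfies
$$A\setminus A_t\subset\{x\in\Omega \text{ : }\delta_{\partial\Omega}(x)\le t\}\cup\{x\in\Omega \text{ : }\delta_{\pi_\lambda}(x)\le t\},$$
and both sets on the right are controlled by standard tubular-neighborhood estimates, giving a bound that is linear in $t$ with a geometric constant depending on $n$, $\diam\Omega$ and $r_\Omega$. Balancing the two contributions — i.e., optimizing $c_0^{-1}t^{-(s+1)}[\partial_\nu^s u]_{\partial\Omega}+C'\,t$ in $t$ — yields the choice $t\sim[\partial_\nu^s u]_{\partial\Omega}^{1/(s+2)}$ and thus the exponent $1/(s+2)$ of \eqref{eq:vecchia prop 4.4}.

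The main obstacle is a careful bookkeeping of constants: one must propagate the precise dependence on $n$, $s$, $r_\Omega$, $\diam\Omega$ and $R$ through both the Hopf estimate and the tubular-neighborhood bound so that the final prefactor assembles into the explicit form of $C_\star$ given in \eqref{eq:C star}. In particular, extracting $c_0$ with the exact shape $r_\Omega^{1-s}(f(0)+\|u\|_{L_s(\R^n)})$ is what produces the reciprocal factor in $C_\star$, while the $r_\Omega^{n-1}$ factor in $C_\star$ arises from the tubular-neighborhood bound on $\{\delta_{\partial\Omega}<t\}$ via the surface measure of $\partial\Omega$.
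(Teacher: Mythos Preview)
Your proposal is correct and follows essentially the same route as the paper, which simply defers to \cite[Proposition~4.4]{DPTVParallelStability} with \thref{lem:symmetric difference} replacing the corresponding lemma there; the bulk/thin decomposition of $A=(\Omega\cap H_\lambda')\setminus\Omega_\lambda'$, the fractional Hopf lower bound $u\ge c_0\,\delta_{\partial\Omega}^s$, and the optimization in $t$ are precisely the ingredients of that argument. One minor quibble: your attribution of the $r_\Omega^{n-1}$ factor to the tubular-neighborhood bound on $\{\delta_{\partial\Omega}<t\}$ is not quite right, since that bound rather produces a factor of $\mathcal H^{n-1}(\partial\Omega)\lesssim |\Omega|/r_\Omega$ via the interior sphere condition; the precise assembly of the constant into the stated form of $C_\star$ requires a slightly different bookkeeping, but this does not affect the structure of the proof.
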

\begin{proof}
The proof runs as the proof of Proposition~4.4 in~\cite{DPTVParallelStability}, but using \thref{lem:symmetric difference} instead of~\cite[Lemma~4.3]{DPTVParallelStability}.
\end{proof}

Notice that, if the Serrin-type overdetermined condition~\eqref{eq:overdetermination} is in force then~\([ \pa_\nu^s u]_{\partial \Om}=0\), and hence \thref{prop:vecchia prop 4.4} gives that, for every direction, $\Om$ must be symmetric with respect to the critical hyperplane. This implies that~$\Om$ must be a ball, and hence recovers the main result of~\cite{MR3395749}.

Moreover,
\thref{prop:vecchia prop 4.4} allows us to obtain \thref{thm:Main Theorem} by exploiting tools of the quantitative method of the moving planes that have been developed in~\cite{MR3836150} (see also~\cite{MR4577340, RoleAntisym2022, DPTVParallelStability}).

\begin{proof}[Proof \thref{thm:Main Theorem}]
The proof can be completed by combining \thref{prop:vecchia prop 4.4} with tools already used in~\cite{MR3836150, DPTVParallelStability}.
We can assume that, for every~$i= 1, \dots, n$, the critical plane~$\pi_{e_i}$ with respect to the coordinate direction~$e_i$ coincides with~$\left\lbrace x_i = 0 \right\rbrace $ . Given~$e \in S^{n-1}$, denote by~$\lambda_e$ the critical value associated with~$e$.
Reasoning as in the proof of~\cite[Proposition~4.5]{DPTVParallelStability}, but using \thref{prop:vecchia prop 4.4} instead of~\cite[Proposition~4.4]{DPTVParallelStability}, we get that, if
\begin{equation}\label{eq:assumption for lambda estimate}
[\pa_\nu^su]^{\frac 1{s+2} }_{\partial G} \leq \frac {\vert \Omega \vert } {n C_\star }  ,
\qquad \text{ with } C_\star \text{ as in~\eqref{eq:C star}},  
\end{equation}
then
\begin{equation}\label{eq:lambda estimate frac normal der}
\vert \lambda_e \vert \leqslant C [\pa_\nu^s u]_{\partial \Om}^{\frac 1 {s+2} }
\quad \text{ for all } e\in \Sph^{n-1} ,
\end{equation}
with
\begin{equation*}
C := C(n,s) \left[1+
\frac{ 1 }{ r_\Om^{1-s}  \left( f(0)+\|u\|_{L_s(\R^n)} \right) } \right]
\left( \frac{\diam \Omega}{R} \right)^{2n+2+6s} 
R^{n+2-s} \,
\left( \frac{r_\Om^{n-1} \diam \Om}{|\Om|} \right)
.
\end{equation*}
\thref{thm:Main Theorem} follows by reasoning as in the proof of~\cite[Theorem~1.2]{MR3836150} but using~\eqref{eq:assumption for lambda estimate} and~\eqref{eq:lambda estimate frac normal der} instead of~\cite[Lemma~4.1]{MR3836150}. Notice that, recalling the definition of~$r_\Om$ and~\eqref{eq:def R}, the measure of~$\Omega$ can be easily estimated by means of~$|\Om| \ge |B_1| r_\Om^{n} \ge |B_1| R^n$.
\end{proof}

\section*{Acknowledgements} 

All the authors are members of the Australian Mathematical Society (AustMS).
Giorgio Poggesi is supported by the Australian Research Council (ARC) Discovery Early Career Researcher Award (DECRA) DE230100954 ``Partial Differential Equations: geometric aspects and applications'' and is member of the Istituto Nazionale di Alta Matematica (INdAM)/Gruppo Nazionale Analisi Matematica Probabilit\`a e Applicazioni (GNAMPA).
Jack Thompson is supported by an Australian Government Research Training Program Scholarship.
Enrico Valdinoci is supported by the Australian Laureate Fellowship FL190100081 ``Minimal surfaces, free boundaries and partial differential equations''.

\printbibliography
\vfill
\end{document}